\def\ismimseprint{1}        
\newtheorem{theorem}{Theorem}[section]
\newtheorem{lemma}[theorem]{Lemma}
\numberwithin{equation}{section}
\theoremstyle{remark}
\newtheorem{remark}[theorem]{Remark}
\colorlet{colorhist}{CornflowerBlue!40!White}
\colorlet{colorhistborder}{CornflowerBlue!80!White}
\colorlet{colordist}{Black}
\newcommand{\sect}{\textcolor{black}{Section}}
\newcommand{\refchap}{\textcolor{black}{Chapter}}
\newcommand{\refsect}{\textcolor{black}{Section}}
\newcommand{\refthm}{\textcolor{black}{Theorem}}
\newcommand{\numset}[1]{\ensuremath{\mathbb{#1}}}
\newcommand{\nbyn}{\ensuremath{n \times n}}
\newcommand{\nbym}{\ensuremath{n \times m}}
\newcommand{\F}{\ensuremath{\numset{F}}}
\newcommand{\N}{\ensuremath{\numset{N}}}
\renewcommand{\Re}{\ensuremath{\mathrm{Re}}}
\renewcommand{\Im}{\ensuremath{\mathrm{Im}}}
\newcommand{\R}{\ensuremath{\numset{R}}}
\newcommand{\Rnn}{\ensuremath{\R^{\nbyn}}}
\newcommand{\C}{\ensuremath{\numset{C}}}
\newcommand{\Cn}{\ensuremath{\C^n}}
\newcommand{\Cnm}{\ensuremath{\C^{\nbym}}}
\newcommand{\Cnn}{\ensuremath{\C^{\nbyn}}}
\newcommand{\trans}[1]{\ensuremath{#1^T}}
\newcommand{\tconj}[1]{\ensuremath{#1^*}}
\newcommand{\conj}[1]{\ensuremath{\overline{#1}}}
\newcommand{\On}[1][n]{\ensuremath{O(#1)}}
\newcommand{\Oplus}[1][n]{\ensuremath{SO(#1)}}
\newcommand{\Ominus}[1][n]{\ensuremath{O^-(#1)}}
\newcommand{\Un}[1][n]{\ensuremath{U(#1)}}
\newcommand{\Uplus}[1][n]{\ensuremath{SU(#1)}}
\newcommand{\bigO}{\ensuremath{\mathcal O}}
\newcommand{\normdist}{\ensuremath{\mathcal N_{\R}}}
\newcommand{\cnormdist}{\ensuremath{\mathcal N_{\C}}}
\newcommand{\normdistf}{\ensuremath{\mathcal N_{\F}}}
\newcommand{\matnormdist}[2]{\ensuremath{\mathcal N_{\R}^{(#1,#2)}}}
\newcommand{\matcnormdist}[2]{\ensuremath{\mathcal N_{\C}^{(#1,#2)}}}
\newcommand{\matnormdistf}[2]{\ensuremath{\mathcal N_{\F}^{(#1,#2)}}}
\newcommand{\chisquared}{\ensuremath{\chi_{\R}^2}}
\newcommand{\cchisquared}[1][n-1]{\ensuremath{\chi_{\C}^2(#1)}}
\newcommand{\follows}{\ensuremath{\thicksim}} \newcommand{\wh}{\widehat}
\newcommand{\wt}{\widetilde} \newcommand{\eu}{\ensuremath{\mathrm{e}}}
\newcommand{\iu}{\ensuremath{\mathrm{i}}}
\DeclarePairedDelimiter{\norm}{\lVert}{\rVert}
\DeclarePairedDelimiter{\bignorm}{\big\lVert}{\big\rVert}
\DeclarePairedDelimiter{\Bignorm}{\Big\lVert}{\Big\rVert}
\DeclarePairedDelimiter{\abs}{\lvert}{\rvert}
\DeclareMathOperator{\diag}{diag}
\DeclareMathOperator{\Arg}{Arg}
\newcommand{\algto}{\ensuremath{\mathbf{\ to\ }}}
\newcommand{\detsign}{\ensuremath{\xi}}
\newcommand{\funcomment}[1]{%
  \begin{tcolorbox}[nobeforeafter, width=0.88\textwidth, %
    boxsep=0pt, left=4pt, right=4pt, top=4pt, bottom=4pt, %
    colback=gray!15!white, sharp corners, boxrule=0pt, frame hidden, %
    enhanced]
    \normalfont{#1}
  \end{tcolorbox}}
\newcommand{\thetitle}{Sampling the eigenvalues of random\\
  orthogonal and unitary matrices}
\newcommand{\thetitlenote}{Version of \today.}
\newcommand{\theauthori}{Massimiliano Fasi}
\newcommand{\theaddressi}{%
  School of Science and Technology, %
  \"Orebro University, %
  \"Orebro, Sweden}
\newcommand{\theemaili}{massimiliano.fasi@oru.se}
\newcommand{\thefundingi}[1][this author]{%
  The work of #1 was supported by the Royal Society, the Wenner-Gren
  Foundations grant UPD2019-0067, and the Istituto Nazionale di Alta
  Matematica INdAM-GNCS Project 2019.}
\newcommand{\theauthorii}{Leonardo Robol}
\newcommand{\theaddressii}{%
  Dipartimento di Matematica, %
  Universit{\`a} di Pisa, Italy}
\newcommand{\theemailii}{leonardo.robol@unipi.it}
\newcommand{\thefundingii}[1][This author]{%
  #1 is a member of the research group GNCS, and his work has been
  partially supported by a GNCS/INdAM project ``Giovani Ricercatori'' 2018.}
\newcommand{\theabstract}{We develop an efficient algorithm for sampling the
  eigenvalues of random 
  matrices distributed according to the Haar measure over the orthogonal or
  unitary group. Our technique samples directly a factorization of the
  Hessenberg form of such matrices, and then computes their eigenvalues with a
  tailored core-chasing algorithm. This approach requires a number of
  floating-point operations that is quadratic in the order of the matrix being
  sampled, and can be adapted to other matrix groups. In particular, we explain
  how it can be used to sample the Haar measure over the special orthogonal and
  unitary groups and the conditional probability distribution obtained by
  requiring the determinant of the sampled matrix be a given complex number on
  the complex unit circle.}
\newcommand{\thekeywords}{%
  Random matrix\sep %
  unitary matrix\sep %
  orthogonal matrix\sep %
  eigenvalue sampling\sep %
  Haar distribution}
\newcommand{\theclass}{%
  15B10\sep 
  15B52\sep 
  65F15
  }
\begin{document}

\ifx\ismimseprint\undefined
\begin{frontmatter}
  \title{\thetitle\tnoteref{t1}}
  \tnotetext[t1]{\thefundingi[the first author]
    \thefundingii[The second author]}

  \author[1]{\theauthori}
  \ead{\theemaili}
  \address[1]{\theaddressi}

  \author[2]{\theauthorii}
  \ead{\theemailii}
  \address[2]{\theaddressii}

  \begin{abstract}
    \theabstract{}
  \end{abstract}

  \begin{keyword}
    \thekeywords{}

    \MSC[2008] \theclass{}
  \end{keyword}
\end{frontmatter}
\else
\newcommand{\sep}{, }
\title{\thetitle\footnote{\thetitlenote}}
\author{\theauthori\thanks{\theaddressi{} \mbox{(\theemaili)}. \thefundingi}
  \and
  \theauthorii\thanks{\theaddressii{} \mbox{(\theemailii)}. \thefundingii}}
\date{}
\maketitle
\begin{abstract}
  \noindent\textbf{Abstract.} \theabstract{}
  \bigskip

  \noindent\textbf{Key words.} \thekeywords.

  \bigskip

  \noindent\textbf{AMS subject classifications.} \theclass.
\end{abstract}
\fi


\section{Introduction}%
\label{sec:introduction}
 
Random matrix theory, introduced by Wishart~\cite{wish28} about 90 years ago,
investigates the properties of matrices whose entries are random variables. The
quantities of interest range from the joint probability distribution of the
matrix elements to the asymptotic behaviour of its eigenvalues and singular
values~\cite{meht04}, and applications stretch from nuclear
physics~\cite{jphysa36,wign51}, wireless networks~\cite{code11}, and
neuroscience~\cite{raab06} to numerical analysis~\cite{edra05,nego47}
and number theory~\cite{mesn05}. Random matrix theory is still a very active
area of research~\cite{jphysa52}. We refer the
interested reader to the survey by Edelman and Rao~\cite{edra05} for a general
introduction to the topic, and to the
monographs by Forrester~\cite{forrester2010log} and Mehta~\cite{meht04}
for a more complete discussion.
The general
mechanisms by which random matrix theory can be employed to solve practical
problems are discussed by Edelman and Wang~\cite{edwa13}.

In applications, one is often interested in random matrices with a given
structure. In quantum mechanics, for example, the energy levels of a system are
described by the eigenvalue of its Hamiltonian, a Hermitian operator on an
infinite-dimensional Hilbert space. By approximating this space by a Hilbert
space of finite dimension, one can reduce the problem of finding the energy
levels to that of solving a Hermitian eigenvalue problem. The true Hamiltonian,
however, is typically not known, thus it is customary to make statistical
assumptions on the distribution of its entries, enforcing only the symmetry of
the operator. The distribution of the eigenvalues of random symmetric and
Hermitian matrices has been extensively studied~\cite{fuko81,meht04,trot84} and
an algorithm for sampling the eigenvalues of uniformly distributed Hermitian
matrices has been developed by Edelman, Sutton, and Wang~\cite{esw14}.

Here we are interested in
the orthogonal group
${\On = \{Q \in \Rnn \mid \trans{Q}Q = I_n\}}$ and in the unitary group
${\Un = \{U\in \Cnn \mid \tconj{U}U = I_n\}}$, where $I_n$ denotes the identity
matrix of order $n$ and \trans{A} and \tconj{A} denote the transpose and the
conjugate transpose of $A$, respectively. It is easy to prove that the
determinant of an orthogonal or unitary matrix lies on the unit circle,
and that the special orthogonal group
${\Oplus = \{Q \in \On \mid \det Q = 1\}}$ and the special unitary group
${\Uplus = \{U \in \Un \mid \det U = 1\}}$ are subgroups of $\On$ and
$\Un$, respectively. \On{} is made of two
connected components, the already mentioned
\Oplus{} and one in which all matrices have determinant $-1$, which we denote by
$\Ominus$. Clearly, the latter is not a group.

Random
unitary matrices find application in quantum physics where they are employed,
for example, to model scattering matrices and Floquet operators \cite[\refsect~2.1]{forrester2010log}. Random orthogonal matrices, on the other hand, are used
in statistical mechanics to characterize the behavior of certain log-gas systems
\cite[\refsect~2.9]{forrester2010log}.

For a group $G$, the measure $\mu$
such that $\mu(G) = 1$ is a normalized left or right Haar measure if for any
$Q \in G$ and any measurable $\mathcal G \subset G$ it satisfies
$\mu(Q\mathcal G) = \mu(\mathcal G)$ or $\mu(\mathcal GQ) = \mu(\mathcal G)$,
respectively. For compact Lie groups,
it can be shown that the left and right measures are unique and
coincide. Hence, since \On, \Oplus, \Un, and \Uplus{} are all compact Lie
groups~\cite[\refchap~1]{hall15}, they have a unique normalized (left and right)
Haar measure~\cite[\S~58, \S~60]{halm50}.

We consider the problem of sampling  efficiently the joint eigenvalue distribution of
unitary (or orthogonal) matrices distributed according to the Haar measure. Numerically,
this may be obtained by sampling matrices from
the desired group uniformly, and then computing their
eigenvalues by relying, for instance, on the QR iteration.
The latter step requires $\bigO(n^3)$ floating-point operations (flops) to
sample the $n$ eigenvalues of a matrix of order $n$. The
key observation is that for this task it is not necessary to explicitly sample
matrices from the corresponding group, but it suffices to understand the
distribution of their Hessenberg forms, which we analyze in detail in
\sect~\ref{sec:hessenberg-haar}. The main advantage of this approach is that
unitary or orthogonal matrices in Hessenberg form can be diagonalized in
$\bigO(n^2)$ flops. We will exploit this to derive the
algorithm discussed in \sect~\ref{sec:comp-eigen-unit}, which has quadratic
complexity and linear storage requirements.

The algorithm we propose can efficiently sample the joint
distribution of the eigenvalues of Haar-distributed matrices from any of the Lie
groups $\On$, $\Oplus$, $\Un$, and $\Uplus$. In
\sect~\ref{sec:experimental-results} we show that the empirical phase and
spacing of eigenvalues sampled by our algorithm follow the corresponding
theoretical distributions for $\Un$, and then we explore empirically the
distribution of the eigenvalues of matrices from the Haar distribution of
$\Uplus$, $\On$, $\Oplus$, and $\Ominus$,
for which fewer theoretical results are available.

Our starting point is an algorithm proposed by Stewart~\cite{stew80} for
sampling random matrices from the Haar distribution of \On. We recall this
approach and the subsequent generalization to \Un{}, due to Diaconis and
Shahshahani~\cite{dish87}, in \sect~\ref{sec:haar-stewart}. This technique
exploits an algorithm for the QR factorization based on Householder
transformations, which we revise in \sect~\ref{sec:householder-qr}.

These techniques require the sampling
of $\bigO(n^2)$ random
variables, and need $\bigO(n^2)$
memory for storing the result.
We provide an alternative
and more compact formulation
for the Hessenberg form obtained
by the algorithms above, which
requires the sampling of $\bigO(n)$ random
variables and $\bigO(n)$ storage. We
show that this formulation can be
used to compute the eigenvalues
in $\mathcal O(n^2)$ floating
point operations by leveraging
the unitary QR algorithm in \cite{amvw15}.

The use of a condensed factorization
for storing random matrices has been
already explored, for instance, by Edelman and Ure~\cite{edelmanreport},
who sample unitary matrices by taking
random Schur parameters \cite{grag86}.
Methods similar to the technique presented
in this work might be obtained by representing the Hessenberg forms of unitary Haar-distributed matrices
using Schur parameters, or similarly condensed forms such as CMV matrices
\cite{cmv03}, and then
using a quadratic method to compute
their eigenvalues  \cite{aurentz2016roots,amrv18,aurentz2018fast,bevilacqua2015cmv,bevilacqua2015compression,gemignani2017fast,grag86}.

The approach discussed here is based on the
unitary QR algorithm in \cite[\refsect~5]{amvw15},
The latter can be seen as a special case of the rootfinding
algorithm of Aurentz et al.~\cite{aurentz2016roots}, which has
been proven to be backward stable~\cite{aurentz2018fast},
and compares favorably with the methods above
in terms of performance~\cite{aurentz2016roots}.

Finally, we introduce some notation.
Throughout the manuscript, we use capital
letters ($A$, $B$, \dots) to denote matrices,
lower case letters ($u$, $v$, \dots) to denote vectors, and lower
case Greek letters ($\alpha$, $\beta$, \dots) to denote scalars.
We indicate the entries of matrices and vectors using a subscript
notation, so that $a_{ij}$ denotes the entry in position
$(i,j)$ of the matrix $A$ and $v_k$ refers to the $k$th element of
the vector $v$.
We use the same notation for random variables.

We denote by
$\normdist(\mu, \sigma^2)$ the Gaussian distribution centered at $\mu \in \R$
with variance $\sigma^2$,
and by $\matnormdist{m}{n}$ the distribution of $m \times n$ random matrices
with independently distributed Gaussian entries, that is,
\begin{equation*}
  X \follows \matnormdist{m}{n} \iff x_{ij} \follows \normdist(0,1),\qquad
  i = 1, \ldots, m,\quad j = 1, \ldots n.
\end{equation*}
The chi-squared distribution with $k$ degrees of freedom, denoted by
$\chisquared(k)$, is the distribution of
the sum of the squares of $k$ independent Gaussian random variables,
and is formally defined by
\begin{equation*}
  \gamma \follows \chisquared(k) \iff
  \gamma = \sum_{i=1}^k\delta_i^2,\qquad \delta_i \follows \matnormdist{k}{1}.
\end{equation*}

\newcommand{\realz}{\ensuremath{\gamma}}
\newcommand{\imagz}{\ensuremath{\delta}}
\newcommand{\independent}{\ensuremath{\perp\!\!\!\perp}}
These are real-valued distributions. The complex counterpart of
$\normdist(\mu, \sigma^2)$ is denoted by $\cnormdist(\mu, \sigma^2)$ and
defined by
\[ \realz + \iu \imagz \follows \cnormdist(\mu, \sigma^2) \iff
  \realz \follows \normdist\bigg(\Re(\mu), \frac{\sigma^{2}}{2}\bigg)
  \text{ and }
  \imagz \follows \normdist\bigg(\Im(\mu), \frac{\sigma^{2}}{2}\bigg),\quad
  \text{ with } \realz \independent \imagz,
\]
where the notation $\realz \independent \imagz$ indicates that the random
variables $\realz$ and $\imagz$ are independent.
The distribution $\matcnormdist{m}{n}$ is defined as in the real case, and we
can define the complex analogue of the $\chisquared(k)$ distribution as
\[
  \gamma \follows \cchisquared(k) \iff
  \gamma = \sum_{i = 1}^k |\delta_i|^2, \qquad
  \delta_i \follows \cnormdist^{(k, 1)}.
\]
It is easy to prove that $\cchisquared(k) \follows \chisquared(2k) / 2$.


\section{Householder transformations and QR factorization}%
\label{sec:householder-qr}

In this section we briefly recall some basic facts about Householder
transformations, and discuss how they can be employed to compute the QR
factorization of a square matrix.

Let $v \in \Cn$ be a nonzero vector. The matrix
\begin{equation}
  \label{eq:householder}
  P(v) = I_n - \frac{2}{\norm{v}_{2}^2}v\tconj{v}
\end{equation}
is a Householder transformation. It is easy to verify that $P(v)$ is unitary and
Hermitian, and in particular is orthogonal and symmetric if the entries of $v$
are real. This implies that $P{(v)}^2 = I_n$. Moreover, computing the
action of $P(v)$ on a vector requires only $\bigO(n)$ flops, instead of the
$\bigO(n^2)$ flops that would be needed for a generic $n \times n$ matrix-vector
product.

Householder transformations are a convenient tool to zero out the trailing
entries of a nonzero vector $u \in \Cn$. For instance, let
$\theta_1 = \Arg u_1$, where $\Arg \colon \C \to (-\pi, \pi]$ denotes the principal value
of the argument function. Then the matrix $P(v)$ for
$v = u + \eu^{\iu \theta_1}\norm{u}_{2}e_1$ is such that
$P(v)u = -\eu^{\iu \theta_1}\norm{u}_{2}e_1$, where $e_i$ denotes the $i$th column of
the identity matrix. In order to zero out only the last $n-k$ components of
$u \in \Cnn$, it suffices to consider the block matrix
\begin{equation}
  \label{eq:Pk}
  \wh P_k(u) :=
  \begin{bmatrix}
    I_{k-1} & \\
    & P(v)\\
  \end{bmatrix},\qquad
  v := u_{k:n} + \eu^{\iu \theta_k} \norm{u_{k:n}}_2e_1,\qquad
  \theta_k := \Arg u_k,\qquad
\end{equation}
where $u_{i:j} \in \C^{j-i+1}$ denotes the vectors that contains the entries of
$u$ from the $i$th to the $j$th inclusive.

Any matrix $A \in \Rnn$ has the QR factorization $A=:QR$, where $Q \in \On$ and
$R \in \Rnn$ is upper triangular~\cite[\refthm~5.2.1]{gova13}.
If $A$ is nonsingular, this factorization
is unique up to the sign of the diagonal entries of $R$. This result can be
extended to complex matrices: any
nonsingular matrix $A \in \Cnn$ has a unique QR
factorization $A =: QR$, where $Q \in \Un$ and $R \in \Cnn$ is upper triangular
with real positive entries along the diagonal~\cite[\refthm~7.2]{trba97}. More
generally, the QR factorization
of a full-rank matrix
is unique as long as the phases of
the diagonal entries of $R$ are fixed.

In many of the following proofs,
it will be useful to assume that the
matrix $A$
under consideration has full rank.
This is typically not restrictive,
since
rank-deficient matrices are a
zero-measure set in $\normdist^{(n,n)}$
and $\cnormdist^{(n,n)}$; we will
comment on this fact in further detail when needed.

We now explain how to compute efficiently the QR factorization of an
$n \times n$ complex matrix~$A$ by means of Householder
reflections~\cite[\refsect~5.2.2]{gova13}. The corresponding procedure for real
matrices can be obtained by employing real Householder reflectors.
Let the matrix $A^{(0)} := A$ be partitioned by columns as
\begin{equation*}
  A^{(0)} = \begin{bmatrix}
    A_1^{(0)} &\cdots& A_n^{(0)}
  \end{bmatrix},
\end{equation*}
and let $\wt P_1:=\wh P_1\big(A_1^{(0)}\big)$.
We obtain that
\begin{equation*}
  \wt P_1A^{(0)} =
  \begin{bmatrix}
    r_{11} & c \\
    0 & A^{(1)}
  \end{bmatrix},\qquad
  r_{11} = -\eu^{\iu \theta} \Bignorm{A_1^{(0)}},\quad
  A^{(1)}
  \in \C^{(n-1) \times (n-1)},\quad
  c \in \C^{1 \times (n-1)},
\end{equation*}
where $\theta$ denotes the complex sign of the top left element of the matrix $A^{(0)}$.
If we apply this procedure recursively to
the trailing submatrix $A^{(1)}$,
after $n-1$ steps we obtain
\begin{equation}
  \label{eq:R}
  \wt P_{n-1}\cdots \wt P_1A =: R,\qquad
  \wt P_k:=\wh P_k\big(A_1^{(k-1)}\big),\qquad
  k = 1, \dots, n-1,
\end{equation}
where the $Q:=\wt P_1\cdots \wt P_{n-1}$ is unitary and $R$ is upper triangular. This
algorithm produces the matrix $R$ and the factors $\wt P_i$ for
$i = 1, \dots, n-1$ in $4n^3/3$ flops~\cite[\refsect~5.2.2]{gova13}. Computing the
matrix $Q$ explicitly requires an additional $4n^3/3$
flops~\cite[\refsect~5.1.6]{gova13}, but by exploiting the structure one can
compute the action of $Q$ on a vector or matrix with only $n^2$ and $n^3$ flops,
respectively.

In order to normalize the factorization, note that if $D$ is the diagonal matrix
such that \mbox{$d_{ii} = -\eu^{-\iu \theta_i}$} where
$\theta_i$ is defined as in \eqref{eq:Pk}, then the matrix $DR$ has
positive real entries along the diagonal. Therefore, the  normalized
factorization with positive diagonal entries
in the upper triangular factor
is:
\begin{equation}
  \label{eq:normalizedQR}
  A = \wt Q \wt R,\qquad
  \wt R := D^{*}R,\quad
  \wt Q := \wt P_1 \dots \wt P_{n-1} D,\quad
  D := -\diag(\eu^{\iu \theta_1}, \dots, \eu^{\iu \theta_n}),
\end{equation}
where $\wt P_1, \dots, \wt P_{n-1}$ are as in~\eqref{eq:R}.

A matrix $H \in \Cnn$ is in upper Hessenberg form if $h_{ij} = 0$ when
$i > j + 1$. Any square matrix is unitarily similar to an upper Hessenberg
matrix, that is, for any $A \in \Cnn$ there exists a matrix $U_A \in \Un$ such
that $U_A^{\vphantom{*}}A \tconj{U}_A$ is an upper Hessenberg matrix.


\section{The Haar measure and Stewart's algorithm}%
\label{sec:haar-stewart}

Birkhoff and Gulati~\cite[\refthm~4]{bigu79} note that if the QR factorization
$A=:QR$ of an $n \times n$ matrix $A \follows \matnormdist{n}{n}$
is normalized so that the entries along the diagonal of $R$ are all positive,
then~$Q$ is distributed according to the Haar measure over \On{}.

This observation suggests a straightforward method for sampling Haar
distributed matrices from \On{}. One can simply generate an $n \times n$ real
matrix $A \follows  \matnormdist{n}{n}$,
compute its QR decomposition $A=:QR$, and normalize it as discussed in
\sect~\ref{sec:householder-qr}.
This procedure is easy to implement, since
efficient and numerically stable routines for computing the QR factorization are available
in most programming languages.

The computational cost of this technique can be approximately halved by
computing the QR factorization implicitly. Stewart~\cite{stew80} proposes an
algorithm that does not explicitly generate the random matrix $A$, but produces
the transpose of the matrix $Q$ in factored form as $D\wt P_1\dots \wt P_{n-1}$,
where $\wt P_k := \wh P_k\big(x^{(k)}\big)$ for some random vector
$x^{(k)} \follows \matnormdist{n}{1}$, and $D$ is an $n \times n$ diagonal sign
matrix whose entries are computed as on line~\ref{ln:diag} of
Algorithm~\ref{alg:action-Haar}.

This algorithm readily generalizes to the complex case, as suggested by Diaconis
and Shahshahani~\cite{dish87} and discussed in detail by Mezzadri~\cite{mezz07}.
In order to sample Haar-distributed random matrices from \Un{}, it suffices to
generate vectors with entries drawn from the standard complex normal
distribution $\cnormdist(0,1)$, and replace the real sign function by its
complex generalization $e^{\iu \Arg(z)}$ for $z \in \C$.

We outline this approach in Algorithm~\ref{alg:action-Haar}. The function
\textsc{Umult(X, \F)} computes the action of a Haar-distributed matrix from $\On$
(if $\F = \R$) or $\Un$ (if $\F = \C$) on the rectangular matrix ${X \in \Cnm}$.
In order
to determine the computational cost of the algorithm, note that asymptotically
only the two matrix-vector products and the matrix
sum on line~\ref{ln:action-Haar-cost} are significant.
Therefore, each iteration of the for loop starting on line~\ref{ln:for} requires
$4km$ flops, and the computation cost of Algorithm~\ref{alg:action-Haar}
is approximately $2n^2m$ flops.

In order to sample Haar-distributed matrices, it suffices to set $X$ to $I_n$.
In this case, the computational cost of the algorithm can be reduced by taking
into account the special structure of $A$: the cost of
line~\ref{ln:action-Haar-cost} drops to $4k^2$ flops per step, which yields an
overall computational cost of $4n^3/3$ flops.

\begin{algorithm2e}[t]
  \caption{Action of a matrix from the Haar distribution.}\label{alg:action-Haar}
  \Function{$\textsc{Umult}(X \in \Cnm, \F \in \{\R,\C\})$
    \funcomment{Compute the matrix $QX$, where $Q$ is an orthogonal (if
      $\F = \R$) or unitary (if $\F = \C$) matrix from the Haar distribution.}}{
    \For{$k \gets 2 \algto n$}{\label{ln:for}
      $v \follows \matnormdistf{k}{1}$\; 
      $d_{n-k+1} \gets -\eu^{\iu \Arg v_1}$\;\label{ln:diag}
      $u \gets v - d_{n-k+1} \norm{v}_2e_1$\;
      $u \gets u / \norm{u}_2$\;
      $X =: \begin{bmatrix}
        X_1 \\ X_2
      \end{bmatrix}
      \begin{matrix*}[l]
        \} n-k \\ \} k
      \end{matrix*}$\;
      $X \gets
      \begin{bmatrix}
        X_1 \\ X_2 - (2u)(\tconj{u}X_2)
      \end{bmatrix}$\; \label{ln:action-Haar-cost}
    }
    $z \follows \normdistf(0,1)$\;
    $\Return \diag(d_1, \dots, d_{n-1}, -\eu^{\iu \Arg z}) \cdot X$\;
  }
\end{algorithm2e}


\subsection{Sampling from the special groups}

The ideas presented so far can be modified in order to sample matrices with
prescribed determinant. Imposing that the determinant be $1$ is of particular
interest, as it implies sampling from the compact Lie groups $\Uplus$ and
$\Oplus$. As discussed in the previous section, the QR factorization of a random
matrix $A$ can be used to sample matrices distributed according to the Haar
measure over $\Un$ and $\On$. An
analogous result holds for the special groups, if the
last diagonal entry of $R$ is chosen so that $\det Q = 1$.

\begin{lemma} \label{lem:SU}
  Let $A \follows \matcnormdist{n}{n}$ \textup{(}resp. $A \follows \matnormdist{n}{n}$\textup{)}
  and let $A =: QR$ be its QR factorization, where $Q$ and $R$ are chosen so that
  \[
    r_{ii} \in \{\gamma \in \R : \gamma \ge 0\},\qquad
    r_{nn} = \det(A) \cdot \left[ \prod_{j=1}^{n-1} r_{jj}
    \right]^{-1},
  \]
  whenever $A$ is nonsingular.
  Then, $Q$ is
  distributed according to the Haar measure over $\Uplus$ \textup{(}resp. $\Oplus$\textup{)}.
\end{lemma}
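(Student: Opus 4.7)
The plan is to reduce the claim to the Birkhoff--Gulati/Diaconis--Shahshahani result for the full unitary (resp.\ orthogonal) group by absorbing a single phase into the last column. Since rank-deficient matrices form a null set for $\matcnormdist{n}{n}$ and $\matnormdist{n}{n}$, I can work exclusively on the full-rank event, where the standard normalized QR factorization with all diagonal entries of $R$ strictly positive is unique.

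First I would factor $A =: Q_0 R_0$ with $(r_0)_{ii} > 0$ for all $i$; by the result recalled at the start of \sect~\ref{sec:haar-stewart}, $Q_0$ is Haar-distributed on $\Un$ (resp.\ $\On$). Setting $\alpha := \det(Q_0)$, which lies on the unit circle (resp.\ in $\{\pm 1\}$), I introduce
\begin{equation*}
  Q := Q_0 \cdot \diag(1,\dots,1,\conj{\alpha}), \qquad
  R := \diag(1,\dots,1,\alpha) \cdot R_0.
\end{equation*}
Then $QR = Q_0 R_0 = A$, $\det(Q) = \alpha \conj{\alpha} = 1$, the first $n-1$ diagonal entries of $R$ coincide with those of $R_0$ and are therefore positive, and the last diagonal entry satisfies $r_{nn} = \alpha (r_0)_{nn}$. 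Using $\det(A) = \det(Q_0) \det(R_0) = \alpha \prod_{j=1}^n (r_0)_{jj}$, one checks that $r_{nn} = \det(A)/\prod_{j=1}^{n-1} r_{jj}$, so $(Q,R)$ is the factorization prescribed by the statement.

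It remains to show that the $Q$ constructed in this way is Haar-distributed on $\Uplus$ (resp.\ $\Oplus$). I would argue this by verifying left-invariance and invoking uniqueness of the Haar measure on the compact Lie group $\Uplus$. Fix $V \in \Uplus$. Since $V \in \Un$, the product $VQ_0$ is Haar-distributed on $\Un$, and $\det(VQ_0) = \det(V)\det(Q_0) = \alpha$. Applying the same recipe to $VQ_0$ therefore yields the element
\begin{equation*}
  VQ_0 \cdot \diag(1,\dots,1,\conj{\alpha}) = VQ,
\end{equation*}
which must have the same distribution as $Q$ by construction. Hence the law of $Q$ on $\Uplus$ is invariant under left multiplication by every element of $\Uplus$, so by uniqueness of the Haar measure it coincides with it. The real case is identical, replacing $\cnormdist$ by $\normdist$, $\Un$ by $\On$, and $\Uplus$ by $\Oplus$.

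The only real subtlety is the uniqueness step: one should note explicitly that $\Uplus$ and $\Oplus$ are compact Lie groups (already recorded in \sect~\ref{sec:introduction}), so that Haar is the unique left-invariant probability measure and the invariance argument closes the proof. Everything else is a routine verification that the diagonal rescaling produces a matrix with the prescribed normalization of $R$ and with unit determinant.
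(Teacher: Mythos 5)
Your proof is correct, and it takes a genuinely different route from the paper's. You layer the argument: first invoke the Birkhoff--Gulati result to get a Haar-distributed $Q_0$ on $\Un$ from the standard positive-diagonal QR normalization, then apply the \emph{deterministic} map $\varphi(U) := U\cdot\diag(1,\dots,1,\conj{\det U})$ to land in $\Uplus$, and finally verify that $\varphi$ intertwines left multiplication by $V\in\Uplus$ (crucially using $\det V=1$ so that the scalar $\alpha$ is unchanged), so that the law of $Q=\varphi(Q_0)$ is left-invariant and hence Haar by uniqueness on the compact group. The paper instead works directly with the QR factorization $A=QR$ under the lemma's normalization and never passes through the $\Un$ case: it observes that for $P\in\Uplus$ the product $PA$ has the same Gaussian law as $A$, and that $PA=(PQ)R$ is again a QR factorization with the \emph{same} triangular factor $R$, which already satisfies the normalization constraints; uniqueness of the normalized QR factorization then gives $PQ\overset{d}{=}Q$ immediately. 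Both arguments rest on the same underlying invariance of the Gaussian ensemble under unitary left multiplication, but yours buys modularity --- you reuse the known $\Un$ statement as a black box and only have to track how a diagonal correction transforms --- at the cost of an extra layer, while the paper's argument is shorter and self-contained. One small point in your favour: you state explicitly that compactness of $\Uplus$ is what makes left-invariance sufficient to identify the Haar measure, a step the paper leaves implicit.
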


\begin{proof}
	We consider the complex case first.
	Note that the set of rank deficient
	matrices has measure zero in
	$\matcnormdist{m}{n}$; therefore, the distribution
	of the unitary QR factor of such matrices is irrelevant
	for the distribution under consideration.

	When $A$ is nonsingular, fixing
	the phases of the diagonal entries of $R$ makes the
	QR factorization unique. Hence, the random variables
	$q_{ij}$, for $i,j = 1, \dots, n$ are well-defined.
	In addition, the choice of the diagonal entries of $R$
	ensures that $\det R = \det A$ and thus that
	$\det Q = 1$.

	In order to prove that $Q$ is distributed according
	to the Haar measure over $\Un$, we need to show that
	it has the same distribution as $PQ$ for any constant
	matrix $P \in \Uplus$. For any such $P$,
	the matrix $PA$ has the QR factorization $PA =: (PQ)R$.

	Being independent Gaussian random variables,
	the entries of $A$ are invariant under unitary transformations, thus
	$PA$ has the same distribution as $A$.
        The triangular QR factor of $PA$ is~$R$, which
        necessarily satisfies the normalization constraints on
        the diagonal entries.
        Therefore, $PQ$ has the same distribution as $Q$.


	The proof for the real case
	is analogous and therefore omitted.
\end{proof}
The above result yields a method for sampling the Haar
distribution of the special unitary and orthogonal groups. In the
next sections, we discuss how to make this method efficient
for sampling the corresponding eigenvalue distribution.
The approach we propose can be used
for both $\Un$ and $\On$, and
$\Uplus$ and $\Oplus$.

\begin{remark} \label{rem:detxi}
	Note that the Haar distribution of $\Uplus$ coincides with the Haar distribution of $\Un$
	conditioned to the event $\det Q = 1$. This is easily verified by checking that
	the latter measure is invariant under the action of elements in $\Uplus$. This
	suggests that the above procedure can be further
	generalized and used to sample
	the probability $\mu_{\xi}$ obtained by conditioning $\mu$ with $\det Q = \xi$ for
	some $\xi \in \mathbb S^1$, where $\mathbb S^1 = \{\xi \in \C : \abs{\xi} = 1\}$
	denotes the complex unit circle.
	If $\xi \neq 1$, these matrices do not form a group, but we
	can write
	\[
	  \{ Q \in U(n) \ | \ \det Q = \xi \} =
	  \{ P\wt Q \ | \ \wt Q \in SU(n) \},
	\]
	where $P$ is any constant matrix such that $\det P = \xi$. Sampling the matrices in
	$\Uplus$ and then multiplying them by any fixed $P$ yields the correct
	conditional probability
	distribution. More specifically,
	by choosing the diagonal matrix $P = \diag(1, \ldots, 1, \xi)$
	we can readily adapt the algorithm discussed in the next section to
	sample unitary or orthogonal matrices with determinant $\xi$.
\end{remark}

\subsection{The eigenvalue distribution}

Given a random matrix sampled according to one of the measures described so far,
we are interested in describing the distribution of a generic eigenvalue. This
can be computed as a marginal probability by integrating the joint
eigenvalue distribution with respect to $n - 1$ variables.
For $\Un$, the latter is known explicitly \cite[\refchap~11]{meht04}, and
can be used to prove that a generic eigenvalue is uniformly distributed over
$\mathbb S^1$.

We are unaware of an analogous result for $\Uplus$, and we could not
find any references stating the expected distribution for a generic eigenvalue.
Nevertheless, using the fact that the eigenvalue distribution arises from
Haar-distributed matrices, we can obtain the partial characterization in the
following lemma.
The well-known distribution for $\Un$ is for ease of comparison.

\begin{lemma} \label{lem:su-periodic}
	Let $\mu$ and $\mu_{1}$ be the Haar distributions over $\Un$ and $\Uplus$, respectively,
	and let $\Lambda_\mu$ and $\Lambda_{\mu_1}$ be the corresponding distributions for a generic
	eigenvalue. Then, $\Lambda_{\mu}$ is the uniform distribution over $\mathbb S^1$, and
	$\Lambda_{\mu_1}$ has a  $\frac{2\pi}{n}$-periodic phase, that is,
	\begin{equation} \label{eq:xi-periodic}
	\Lambda_{\mu_1}(\mathcal G) = \Lambda_{\mu_1}\Big(e^{\frac{2\pi i}{n}}\mathcal G\Big), \qquad
	\text{for any measurable set } \mathcal G \subset \mathbb S^1.
	\end{equation}
\end{lemma}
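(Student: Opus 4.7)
The natural approach is to exploit the left-invariance of the Haar measure under multiplication by specific scalar unitary matrices $e^{\iu \theta} I_n$, and to translate this into a rotation symmetry of the generic eigenvalue distribution.

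First I would fix the meaning of ``generic eigenvalue distribution'': given a random $Q$ with eigenvalues $\lambda_1(Q), \dots, \lambda_n(Q)$, set
\[
  \Lambda(\mathcal G) := \mathbb E\Bigg[\frac{1}{n}\sum_{j=1}^n \mathbf 1_{\mathcal G}\big(\lambda_j(Q)\big)\Bigg],
  \qquad \mathcal G \subset \mathbb S^1 \text{ measurable},
\]
which is the averaged empirical spectral distribution. Since the eigenvalues of $\alpha Q$ are precisely $\alpha \lambda_1(Q), \dots, \alpha \lambda_n(Q)$ for any $\alpha \in \mathbb S^1$, one has $\Lambda_{\alpha Q} (\mathcal G) = \Lambda_{Q}(\overline\alpha \mathcal G)$ whenever $Q$ and $\alpha Q$ are identically distributed.

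For $\Lambda_\mu$ (the $\Un$ case) I would then observe that for every $\theta \in \mathbb R$ the matrix $e^{\iu\theta} I_n$ lies in $\Un$, so by left-invariance of the Haar measure $\mu$ the random matrix $e^{\iu\theta}Q$ has the same distribution as $Q$. Combining with the identity above gives $\Lambda_\mu(\mathcal G) = \Lambda_\mu(e^{-\iu\theta}\mathcal G)$ for every $\theta$, i.e.\ $\Lambda_\mu$ is rotation-invariant on $\mathbb S^1$. Since the only rotation-invariant Borel probability measure on $\mathbb S^1$ is the normalized Lebesgue measure, $\Lambda_\mu$ must be uniform.

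For $\Lambda_{\mu_1}$ (the $\Uplus$ case) the same argument applies, but now the scalar matrix $e^{\iu\theta} I_n$ belongs to $\Uplus$ only when $\det(e^{\iu\theta} I_n) = e^{\iu n \theta} = 1$, that is, only for $\theta \in \frac{2\pi}{n}\mathbb Z$. Taking $\theta = 2\pi/n$ and using left-invariance of $\mu_1$ under this element of $\Uplus$ yields \eqref{eq:xi-periodic}. The main technical point, which I would flag explicitly, is the justification that the generic eigenvalue distribution transforms as claimed under left multiplication by a scalar matrix; this is clean under the averaged-empirical-measure definition above and does not require any exchangeability of the $\lambda_j$ as individual random variables. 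No deeper joint-density computation is needed, which is why the proof says more than one might expect from so little structural input.
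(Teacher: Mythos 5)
Your proposal is correct and follows essentially the same route as the paper: both proofs exploit left-invariance of the Haar measure under the scalar matrices $\xi I_n$ that lie in the group (all $\xi\in\mathbb S^1$ for $\Un$, only the $n$th roots of unity for $\Uplus$), translate this into a rotation symmetry of the eigenvalue distribution, and invoke uniqueness of the rotation-invariant probability measure on $\mathbb S^1$. The paper leaves the definition of the ``generic eigenvalue distribution'' and its transformation law under scalar multiplication implicit, whereas you spell both out via the averaged empirical spectral measure---a reasonable bit of extra care, not a different argument.
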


\begin{proof}
	We start considering the distribution of $\Un$.
	Recall that $\mu$ is invariant under left multiplication
	in $U(n)$, and since $\xi I \in \Un$ for any $\xi \in \mathbb S^{1}$, we have that
	$\Lambda_{\mu}$ is invariant under multiplication by $\xi \in \mathbb S^1$. Being
	$\mathbb S^1$ a compact Lie group, $\Lambda_\mu$ must be its Haar measure, which
	is the uniform distribution.

	We can use a similar argument for $\Uplus$. Since $\xi I \in \Uplus$ for any
	$\xi$ such that $\xi^n = 1$, we have that $\Lambda_{\mu_1}$ is invariant
	under multiplication by a root of the unity, thus must be
	$\frac{2\pi}{n}$-periodic as in~\eqref{eq:xi-periodic}.
\end{proof}

In \sect~\ref{sec:experimental-results} we will verify this claim experimentally, to test the correctness
of our implementation.
In particular, we will find that $\Lambda_\mu$ is the uniform distribution, as expected, and
that $\Lambda_{\mu_1}$ has the periodicity predicted by Lemma~\ref{lem:su-periodic}.

\section{The Hessenberg form of Haar-distributed matrices}%
\label{sec:hessenberg-haar}

As mentioned in the introduction, the unitary QR algorithm of \cite{amvw15}
cannot be applied directly to the representation of the upper Hessenberg form of
a random matrix. In this section, first we show how to sample a factorization of
the upper Hessenberg form of Haar-distributed matrices using only $\bigO(n)$
random variables, then we explain how to rewrite this factorization in a form
that is suitable for computing the eigenvalues with core-chasing algorithms,
which we briefly review in \sect~\ref{sec:comp-eigen-unit}. The main result of
this section is the following.
\begin{theorem}\label{thm:2x2fact}
  Let $w_1, \ldots, w_{n-1} \in \C^2$ be independent random vectors such that
  \[
    w_j = \begin{bmatrix}
      \alpha_j \\ \beta_j
    \end{bmatrix}, \qquad
    \alpha_j \follows \cnormdist(0, 1), \qquad
    \beta_j^2 \follows \cchisquared[n - j],
  \]
  and let
  \begin{equation}
    \label{eq:reduced-householder}
    H = P_1 \ldots P_{n-1} D \in \Cnn
  \end{equation}
  be the unitary Hessenberg matrix such that
  \begin{equation}
    \label{eq:reduced-householder-pieces}
    P_j = I - \frac{2}{\norm{v_j}_2^2} v_j \tconj{v_j},\quad
    D = -\diag(\eu^{\iu \theta_1}, \dots, \eu^{\iu \theta_n}),\quad
    v_j =
    \begin{bmatrix}
      0_{j-1} \\ \alpha_j + \eu^{\iu\theta_j}\norm{w_j}_2 \\ \beta_j \\ 0_{n-j-1}
    \end{bmatrix},
  \end{equation}
  where $\theta_j = \Arg \alpha_j$ for $j = 1, \dots, n-1$ and
  $\theta_n \follows U(-\pi,\pi]$. Then, the joint eigenvalue distribution of
  $H$ is that of Haar-distributed unitary matrices of $\Un$.
\end{theorem}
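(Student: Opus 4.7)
My plan is to relate $H$ to the output of Stewart's algorithm (\sect~\ref{sec:haar-stewart}) through an auxiliary randomization, and then exhibit a unitary similarity identifying their eigenvalue distributions. First, I augment the probability space with mutually independent uniform unit vectors $u^{(j)} \in \C^{n-j}$, independent of the $\alpha_j,\beta_j,\theta_n$, and set $x^{(j)} = (\alpha_j, \beta_j u^{(j)}) \in \C^{n-j+1}$. This vector has i.i.d.\ $\cnormdist(0,1)$ entries: the polar decomposition of an isotropic complex Gaussian splits into an independent modulus and uniform direction, and the hypothesis $\beta_j^2 \follows \cchisquared[n-j]$ matches exactly the squared norm of $n-j$ independent $\cnormdist(0,1)$ variables. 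Feeding the $x^{(j)}$'s together with the uniform phase $\theta_n$ into the complex version of Stewart's algorithm produces $Q = D\,\wh P_1(x^{(1)}) \cdots \wh P_{n-1}(x^{(n-1)})$, which is Haar-distributed on $\Un$ by Stewart--Diaconis--Shahshahani.

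Next I invoke the equivariance of Householder reflectors under unitary transformations fixing the pivot. For any $R_j \in \Un[n-j]$ with $R_j u^{(j)} = e_1$ and $\hat R_j = I_j \oplus R_j$, one has
\[
  \wh P_j(x^{(j)}) \;=\; \hat R_j^*\, P_j\, \hat R_j,
\]
where $P_j$ is precisely the Householder in the theorem statement, because $\hat R_j$ fixes $e_j$ and sends $x^{(j)}$ to $(\alpha_j,\beta_j,0,\ldots,0)$. Substituting into Stewart's factorization gives $Q = D \prod_{j=1}^{n-1} \hat R_j^*\, P_j\, \hat R_j$. Because eigenvalues are invariant under unitary similarity and under the cyclic reordering $\mathrm{eig}(AB) = \mathrm{eig}(BA)$, it is enough to show that this product is unitarily similar to $H = P_1 \cdots P_{n-1}\,D$ up to such a cyclic shift.

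The key geometric fact enabling this step is that $P_j$ commutes with $\hat R_k$ whenever $k > j$, because $P_j$ acts nontrivially only on rows $j,j+1$ while $\hat R_k$ is the identity on those rows. I would proceed by induction on $n$: the outer conjugation $\hat R_1^*(\cdot)\hat R_1$ can be peeled off, leaving an analogous construction on the $(n-1)\times(n-1)$ trailing block, to which the inductive hypothesis applies. A closing bookkeeping step confirms that the diagonal $D$, after being moved through the cyclic reordering, retains its prescribed distribution---in particular that $\theta_n$ is uniform on $(-\pi,\pi]$, which is inherited from the $\Un[1]$-Haar invariance of the last residual coordinate of $u^{(n-1)}$.

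I expect the main obstacle to be the inductive rearrangement in the last step. Adjacent $\hat R_j$-factors act on nested-but-not-identical subspaces and do not commute with one another, so the telescoping cannot be achieved in a single move---it must exploit the commutation of each $P_j$ with the subsequent $\hat R_k$'s and be carried out one step at a time. A delicate point is that the rotations $R_j$ are determined only up to the $\Un[n-j-1]$-stabilizer of $e_1$, so they must be chosen consistently across induction steps in order for the trailing block to exhibit the recursive structure required by the theorem at size $n-1$.
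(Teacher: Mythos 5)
Your opening moves---the polar decomposition $x^{(j)} = (\alpha_j, \beta_j u^{(j)})$ with $u^{(j)}$ uniform on the unit sphere, and the equivariance $\wh P_j(x^{(j)}) = \hat R_j^*\,P_j\,\hat R_j$ for a rotation $\hat R_j = I_j \oplus R_j$ sending $u^{(j)}$ to $e_1$---are correct and are exactly the paper's step $U_1 \wt P_1 \tconj{U_1} \follows P_1$ in different clothing, so the overall strategy is the same as the paper's.

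The gap is in the inductive assembly, and the commutation you call the ``key geometric fact'' is not by itself what drives it. In the product $\hat R_1^* P_1 \hat R_1\,\hat R_2^* P_2 \hat R_2 \cdots\, D$, neither $\hat R_j$ and $\hat R_k$ nor $\hat R_j$ and $P_k$ commute for $k > j$ (all of these act on the trailing block), so there is no telescoping to start from. What actually carries the induction---and is missing from your sketch---is: (i) the split $D = \wh D\, D_1$ with $D_1 = \diag(d_1, 1, \ldots, 1)$, so that a single conjugation of the whole product by $\hat R_1$ turns $\wh P_1$ into $P_1$, leaves $D_1$ alone, and absorbs $\wh D$ into the trailing block; and (ii) the observation that $\hat R_1 \bigl(\wh P_2 \cdots \wh P_{n-1}\,\wh D\bigr) \hat R_1^*$ is still Haar-distributed on the trailing $\Un[n-1]$ block, which requires the independence of $u^{(1)}$ (hence $\hat R_1$) from the data driving the later factors---your construction has this independence baked in, but the step is essential and must be invoked explicitly. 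Only after these two moves does the inductive hypothesis apply, and the commutation that then matters is not $[P_j,\hat R_k]=0$ for $k>j$ but rather that the inductively produced $1 \oplus \wh U$, with $\wh U e_1 = e_1$, acts trivially on coordinates $1,2$ and hence commutes with $P_1$ and $D_1$, so the inner conjugation does not disturb the factors already in place. One minor side remark: in Stewart's algorithm the last phase $\theta_n$ is drawn from a fresh scalar $z \follows \cnormdist(0,1)$, not from a residual coordinate of $u^{(n-1)}$; the resulting distribution is the same, but the provenance you describe is not what the algorithm does.
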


\begin{proof}
In view of the discussion in \sect~\ref{sec:haar-stewart}, if $Q$ is Haar
distributed then
\begin{equation}
  \label{eq:Q-def}
  Q \follows \wt P_1 \ldots \wt P_{n-1} D,
\end{equation}
with $\wt P_j$ defined as follows:
	\[
	  \wt P_j = P(v_j) = I - \frac{2}{\norm{v_j}^2} v_j \tconj{v_j}, \qquad
	  v_j := \begin{bmatrix}
	    0_{j-1} \\
	    u^{[j]}_1 + \eu^{\iu \theta_j} \bignorm{u^{[j]}}_2 \\
	    u^{[j]}_{2:n} \\
	  \end{bmatrix}, \qquad
	  u^{[j]} \follows \cnormdist^{(n-j+1,1)},
	\]
	where $\theta_j = \Arg\big(u^{[j]}_1\big)$, and $D$ is a diagonal matrix
	defined by $d_{jj} = \eu^{\iu \theta_j}$. In order to prove the claim
	we will reduce this matrix to upper Hessenberg form.

	More specifically, we prove by induction on $n$ that there exists a
        unitary matrix $U$ such that $H \follows UQU^*$
        is upper Hessenberg, and that $Ue_1 = e_1$.
        The latter property will be useful in the induction step. Throughout the
        proof, we will also repeatedly exploit the fact that if $W$ is
        orthogonal then $WP(v)W^* = P(Wv)$, which can be verified by a direct
        computation.

	If $n = 1$, then $Q$ and $H$ are both
	diagonal matrices, and there is nothing to prove.
	If $n = 2$, then $Q$ is already upper Hessenberg, and we may write it as
	\[
	  Q = \wt P_1 D, \qquad
	  \wt P_1 = I -  \frac{2}{\norm{v_1}^2} v_1 \tconj{v_1}, \qquad
	  v_1 := \begin{bmatrix}
	    \alpha_1 + e^{i \theta_1} \bignorm{u^{[1]}} \\
	    \beta_1
	  \end{bmatrix}, \qquad
	  u^{[1]} := \begin{bmatrix}
	    \alpha_1 \\ \beta_1
	  \end{bmatrix}.
	\]
	With the matrix $U = \mathrm{diag}(1, \eu^{-\iu\theta})$, where
        $\theta := \Arg(\beta_1)$, we can perform the similarity transformation
	\[
	  U Q \tconj U = U \wt P_1 D U^* = U \wt P_1 U^* D =
	  P(U v_1) D, \qquad
	  Uv_1 = \begin{bmatrix}
	    \alpha_1 + e^{i \beta_1} \bignorm{u^{[1]}} \\
	    |\beta_1|
	  \end{bmatrix}.
	\]
	We now observe that $|\beta_1|^2 \follows \cchisquared[1]$ and
        $\bignorm{u^{[1]}}_2 = \norm*{
          \begin{bsmallmatrix}
            \alpha_1 \\
            |\beta_1| \\
          \end{bsmallmatrix}}_2$, which implies that $P(U v_1)$ and $P_1$ have
        the same distribution.
	Moreover, the matrix $U$ thus constructed satisfies $Ue_1 = e_1$.

	For the inductive step, assume that the statement holds for matrices of
        order $n-1$, and consider~$Q$ as in~\eqref{eq:Q-def}. If
        $\wt P_1 = I - \frac{2}{\norm{v_1}^2} v_1 \tconj{v_1}$, we can construct
        a Householder reflector $U_1$ such that $U_1 e_1 = e_1$, and
	\begin{equation} \label{eq:U1v1}
	  U_1 v_1 = U_1 \begin{bmatrix}
	    \alpha_1 + \eu^{\iu \theta_1} \bignorm{u^{[1]}} \\
	    u^{[1]}_{2:n}
	  \end{bmatrix} = \begin{bmatrix}
	    \alpha_1 + \eu^{\iu\theta_1} \bignorm{u^{[1]}} \\
	    \bignorm{u^{[1]}_{2:n}}_2 \\
	    0_{n-2} \\
	  \end{bmatrix}.
	\end{equation}
	We note that $U_1$ can be chosen so to be independent of
        $\wt P_2, \ldots, \wt P_{n-1}$, as it depends only on $u^{[1]}$. We can then
        consider the similarity
	\[
	  U_1 Q U_1^* = U_1 \wt P_1 \tconj{U_1} U_1 \wt P_2 \ldots \wt P_{n-1}
	    D U_1^*,
	\]
	where $U_1 \wt P_1 \tconj{U_1} = I - \frac{2}{\norm{v_1}^2} U_1 v_1 \tconj{(U_1 v_1)}
	\follows P_1$ in view \eqref{eq:U1v1}, and
	$\bignorm{u^{[1]}_{2:n}}_2^2 \follows \cchisquared$.
	We now factorize $D$ as
	\[
	  D := \begin{bmatrix}
	  1 \\
	  & d_2 \\
	  && \ddots \\
	  &&& d_n \\
	  \end{bmatrix} \begin{bmatrix}
	   d_1 \\
	   & 1 \\
	   && \ddots \\
	   &&& 1
	  \end{bmatrix} = \wh D D_1,
	\]
        and note that $U_1 D_1 = D_1 U_1$ thanks to $Ue_1 = e_1$.
        Therefore we can write
	\[
	  U_1 Q \tconj{U_1} \follows P_1 
            U_1 \wt P_2 \ldots \wt P_{n-1} \wh D \tconj{U_1}
            D_1 = P_1 \begin{bmatrix}
              1 \\
              & \wh Q
            \end{bmatrix} D_1.
          \]
	 We observe that since $U_1$ is independent of $\wt P_2 \ldots \wt P_{n-1}$
         and $\wh D$, $\wh Q$ is Haar distributed in $U(n-1)$. By inductive
         hypothesis, there exists an $(n-1) \times (n-1)$ unitary matrix $\wh U$
         which satisfies $\wh U e_1 = e_1$ and is such that
	 \[
	   \wh U \wh Q \wh U^* = \wh P_2 \ldots \wh P_{n-1} \wh D,\qquad
           P_j \follows
           \begin{bmatrix}
             1 & \\ & \wh P_j
           \end{bmatrix},\qquad
           j = 2, \dots, n-1.
	 \]
         The property $\wh Ue_1 = e_1$
	 implies that $
         \begin{bsmallmatrix}
           1 & \\ & \wh U
         \end{bsmallmatrix}
         $commutes with both $P_1$ and $D_1$, and we can write
	 \begin{align*}
           \begin{bmatrix}
             1 & \\ & \wh U
           \end{bmatrix}
           U_1Q \tconj U_1
           \begin{bmatrix}
             1 & \\ & \tconj{\wh U}
           \end{bmatrix}
           &\follows
           P_1
           \begin{bmatrix}
             1 & \\ & \wh U
           \end{bmatrix}
           \begin{bmatrix}
             1 & \\ & \wh Q
           \end{bmatrix}
           \begin{bmatrix}
             1 & \\ & \tconj{\wh U}
           \end{bmatrix}
           D_1 \\
	   &= P_1 P_2 \ldots P_{n-1} \wh D D_1 =  P_1 P_2 \ldots P_{n-1} D,
         \end{align*}
	 which concludes the proof.
\end{proof}

The analogue of Theorem~\ref{thm:2x2fact} for real matrices is obtained by
replacing the first element of $w_j$ by $\alpha_j \follows \normdist(0, 1)$ and
by sampling $\theta_n$ uniformly from the set $\{0, \pi\}$. The result can be
easily modified in order to sample the joint eigenvalues distribution of
matrices from the Haar measure over $\Uplus$ and $\Oplus$: setting
\begin{equation*}
  d_{nn} = (-1)^{n-1}\prod_{i=1}^{n-1}d_{ii}
\end{equation*}
guarantees that $\det H = 1$, while Lemma~\ref{lem:SU} ensures that the matrices
are sampled according to the Haar measure.

\begin{remark} \label{rem:su-xi}
	In a similar way, we may set the last diagonal entry of $D$
	to obtain $\det{Q} = \xi$, for any $\xi \in \mathbb S^1$.
	According to Remark~\ref{rem:detxi}, this procedure samples
	the Haar distribution conditioned on the event $\det Q = \xi$.
\end{remark}


\section{Computing the eigenvalues of
	upper Hessenberg unitary matrices}%
\label{sec:comp-eigen-unit}

By Theorem~\ref{thm:2x2fact}, any random upper Hessenberg unitary matrix $H$ can
be described by $\bigO(n)$ parameters by using the factored
form~\eqref{eq:reduced-householder}.

The computation of the eigenvalues of unitary upper Hessenberg matrices was
first considered by Gragg~\cite{grag86}, and later investigated by numerous
authors, see for instance~\cite{ammar1992implementation,gemignani2005unitary,gragg1990divide}. Here, in particular, we are interested in
the approach proposed by Aurentz, Mach, Vandebril, and Watkins~\cite{amvw15}.
This algorithm,
briefly described in \sect~\ref{sec:comp-eigen-hess}, is implemented in \texttt{eiscor}~\cite{amrv18}, a Fortran 90 package
for the solution of eigenvalue problems
by core-chasing methods available on
GitHub.\footnote{\url{https://github.com/eiscor/eiscor}} The software
computes the eigenvalues of the Hessenberg matrix
\begin{equation}
  \label{eq:Hrotations}
  H = G_1 \cdots G_{n-1} D,
\end{equation}
where the unitary matrices \mbox{$G_1$, \dots, $G_{n-1}$} are plane rotations of
the form
\begin{equation}
  \label{eq:rotation}
  G_j
  = \left[
    \begin{array}{@{}cccc@{}}
      I_{j-1} & & \\
      & \wh G_j & \\
      & & I_{n-j-1}
    \end{array}\right],
  \qquad \wh G_j =
  \begin{bmatrix}
    c_j & s_j \\
    -s_{j} & \overline{c}_j \\
  \end{bmatrix},
  \qquad c_j \in \C,
  \qquad s_j \in \R.
\end{equation}

Because of its special structure, the matrix $G_j$ in~\eqref{eq:rotation} is
said to be ``essentially $2 \times 2$'', and the $2 \times 2$ matrix
$\wh G_j$ is called a \emph{core block}. In principle, the core
chasing algorithm in~\cite{amvw15} could be applied to any factorization of $H$
involving only ``essentially $2 \times 2$'' unitary matrices, even though the
particular implementation described in \cite{amrv18} involves only the special
family of plane rotations. In practice, however, the key operation in the QR
algorithm---the so-called \emph{turnover}---has to be implemented with care in
order to ensure backward stability. In order to leverage the analysis done for
rotations of the form~\eqref{eq:rotation}, it is thus convenient to refactorize
$H$ given in the form~\eqref{eq:reduced-householder} as a product of the the
form~\eqref{eq:Hrotations}.

This section is structured as follows. First, we show how to refactorize a
representation in terms of $2 \times 2$ Householder transformations into one
consisting only of plane rotations with real sines. Then we briefly recall the
main ideas underlying the unitary QR algorithm implemented in \texttt{eiscor}.

\subsection{Refactoring core transformations}%
\label{sec:refact-core-transf}

The assumption that all core blocks in the factorization of the Hessenberg
matrix $H$ be plane rotations with real sines is not restrictive, as it is
always possible to rewrite~$H$ as a product of the form~\eqref{eq:Hrotations}.
This refactorization can be performed by noting that any $2 \times 2$ unitary
matrix $U$ can be written as
\begin{equation} \label{eq:UGD}
  U = \begin{bmatrix}
    c & s \\
    -s & \overline c
  \end{bmatrix} \begin{bmatrix}
    d_1 \\
    & d_2
  \end{bmatrix}, \qquad
  \begin{cases}
    c = u_{11} e^{\iu \theta}, \\
    s = -u_{21} e^{\iu \theta},
  \end{cases}\qquad
  \theta = \Arg(\conj{u_{21}}).
\end{equation}
where the diagonal entries are given by
\[
  d_1 = e^{-\iu \theta} |u_{11}|^2 + e^{\iu \theta} u_{21}^2, \qquad
  d_2 = e^{\iu \theta} ( u_{11} u_{22} - u_{21} u_{12} ).
\]
The procedure above can be performed in a
backward stable manner, as it coincide with
the computation of the QR decomposition of
$U$ \cite{amrv18}.

In addition, the product of a plane rotation with real sines~$G$ and a unitary
diagonal $2 \times 2$ matrix $D$ can be refactored so to swap the order of the two
operations. In fact, there exist a unitary $2 \times 2$ diagonal matrix $\wt D$ and a
plane rotation with real sines $\wt G$ such that $GD = \wt D \wt G $. This
property is easy to verify, and represents the foundation of most core-chasing
algorithms~\cite{amrv18}. Combining these two observations gives the following.

\begin{lemma}
  \label{lem:refact}
  Let $H \in \Cnn$ be a unitary upper Hessenberg matrix factored as
  in~\eqref{eq:reduced-householder}. Then, there exist $G_1$, \dots,
  $G_{n-1} \in \Cnn$ unitary plane rotations with real sines and $\wt D \in \Cnn$
  unitary diagonal such that
  \begin{equation} \label{eq:fact-rot}
    H = G_1 \ldots G_{n-1} \wt D.
  \end{equation}
  This refactorization can be computed in $\bigO(n)$ flops.
\end{lemma}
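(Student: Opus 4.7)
The plan is to prove this constructively by first turning each Householder transformation into a rotation times a diagonal, and then sweeping all the diagonals through to the right using the commutation rule described just before the lemma.

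First, each $P_j$ in the factorization~\eqref{eq:reduced-householder} is the identity outside of rows $j$ and $j+1$, so it acts as an essentially $2\times 2$ unitary matrix. I would invoke the decomposition~\eqref{eq:UGD}, which shows that any such $2\times 2$ unitary can be written as a plane rotation with real sine times a unitary diagonal. Applying this to each $P_j$ in $O(1)$ flops yields
\begin{equation*}
  P_j = G_j^{(0)} D_j, \qquad j = 1, \dots, n-1,
\end{equation*}
where $G_j^{(0)}$ has the form~\eqref{eq:rotation} (a rotation with real sine acting on rows $j,j+1$) and $D_j$ is a $2\times 2$ unitary diagonal block embedded on rows $j,j+1$. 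Substituting into~\eqref{eq:reduced-householder} then gives
\begin{equation*}
  H = G_1^{(0)} D_1 \, G_2^{(0)} D_2 \, \cdots \, G_{n-1}^{(0)} D_{n-1} D.
\end{equation*}

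Next I would sweep the inner diagonals to the right one at a time. The key observation is that $D_j$ is diagonal on rows $j,j+1$ while $G_{j+1}^{(0)}$ acts only on rows $j+1,j+2$, so the only non-trivial interaction is on row $j+1$. By applying the commutation rule $GD = \wt D\wt G$ stated just before the lemma (read from right to left to swap a diagonal past a rotation), one obtains
\begin{equation*}
  D_j G_{j+1}^{(0)} = \wt G_{j+1} \wt D_j,
\end{equation*}
where $\wt G_{j+1}$ is again a plane rotation with real sine on rows $j+1,j+2$ and $\wt D_j$ is a unitary diagonal. Doing this left-to-right for $j = 1, 2, \dots, n-2$, merging the diagonal that emerges on the right with $D_{j+1}$ at each step (since both are diagonal), one ends up with a factorization of the form
\begin{equation*}
  H = G_1 G_2 \cdots G_{n-1} D',
\end{equation*}
where each $G_j$ is a plane rotation with real sine and $D'$ is the product of all accumulated diagonals multiplied by the original trailing $D$; setting $\wt D := D'$ yields~\eqref{eq:fact-rot}.

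The complexity count is immediate: the initial refactoring costs $O(1)$ per index for a total of $O(n)$; each of the $n-2$ swap-and-merge steps is an $O(1)$ operation on $2\times 2$ blocks; and the final combination with $D$ is $O(n)$. This gives $O(n)$ overall. There is no real obstacle here — the main point to be careful about is the bookkeeping of which rows each factor touches, so that one can justify (i) that each swap involves only a single $2\times 2$ interaction and (ii) that the real-sine property of the rotations is preserved at each step, both of which follow directly from the two $2\times 2$ identities established before the lemma.
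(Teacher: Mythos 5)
Your proposal is correct and follows essentially the same strategy as the paper: decompose each essentially $2\times 2$ factor via~\eqref{eq:UGD} and then push the resulting diagonals to the right using the diagonal--rotation commutation rule. The only difference is presentational — the paper phrases the sweep as an induction on $n$, absorbing the accumulated diagonal into the next Householder before re-decomposing, whereas you pre-decompose every $P_j$ at once and then sweep the diagonals iteratively — but both are the same $\bigO(n)$ forward sweep.
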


\begin{proof}
  The proof is by induction on $n$. For $n = 2$ we have that $H = P_1 D$, and
  the refactorization can be performed directly by relying on \eqref{eq:UGD}. If
  $n > 2$, then there exist a plane rotation $G_1 \in \Cnn$ as in~\eqref{eq:rotation} and
  a unitary diagonal matrix
  \begin{equation*}
    D_1 :=
    \begin{bmatrix}
      \alpha & & \\
      & \beta & \\
      & & I_{n-2}
    \end{bmatrix},\qquad
    \alpha, \beta \in \mathbb S^1,
  \end{equation*}
  such that $P_1 = G_1D_1$. Since the matrix
  $\begin{bsmallmatrix}
    d_{11} & \\
    & I_{n-1}
  \end{bsmallmatrix}$
  commutes with $D_1$, $P_2$, $\dots$, $P_{n-1}$, we can write
  \begin{equation*}
    H = G_1 
    \begin{bmatrix}
      \alpha d_{11} & \\
      & I_{n-1}
    \end{bmatrix}
    \wt P_2 P_3 \cdots P_{n-1} \wt D,\qquad
    \wt P_2 := \begin{bmatrix}
      1 & & \\
      & \beta & \\
      & & I_{n-2}
    \end{bmatrix} P_2,
  \end{equation*}
  where $\wt d_{ij} = 1$ if $i=j=1$ and $\wt d_{ij} = d_{ij}$ otherwise.
  We note that this refactorization has the
  form $H =  G_1\begin{bsmallmatrix}
      \alpha d_1 \\
      & 1
    \end{bsmallmatrix}
    \begin{bsmallmatrix}
      1 & \\
      & \wt H
    \end{bsmallmatrix}$, where $\wt H$ has the same
  structure as $H$ but order $n - 1$. The inductive hypothesis yields
  $
  \begin{bsmallmatrix}
    1 & \\
    & \wt H
  \end{bsmallmatrix} =
  G_2 \ldots G_{n-1}
  \begin{bsmallmatrix}
    1 & \\
    & D'
  \end{bsmallmatrix}$, which gives
  \[
    H = G_1 G_2 \ldots G_{n-1} \wt D,\qquad
    \wt D :=
    \begin{bmatrix}
      \alpha d_{11} \\
      & D'
    \end{bmatrix}.
  \]
  This procedure provides an algorithm for refactoring $H$ from the
  form~\eqref{eq:reduced-householder} to the form~\eqref{eq:fact-rot}.
  Noting that each step requires $\bigO(1)$ flops, for a total of
  $\bigO(n)$ flops for the complete refactorization, concludes the proof.
\end{proof}

\subsection{Computing the eigenvalues of unitary Hessenberg matrices}%
\label{sec:comp-eigen-hess}

We have described how to generate unitary upper Hessenberg matrices whose
joint eigenvalue distribution follows the Haar measure,
and we have shown how to write it in the factored
form~\eqref{eq:fact-rot}.

In order to compute the spectrum of $H$ in the form~\eqref{eq:fact-rot} in
$\bigO(n^2)$ flops, we rely on the method
proposed in \cite{amvw15}, which belongs to the
family of \emph{core-chasing algorithms}
\cite{amrv18}. Here we  provide a high-level overview of this technique, and
refer the interested reader to the original paper~\cite{amvw15} for a detailed
discussion.

With the term \emph{core transformation} we indicate an essentially $2 \times 2$
unitary matrix such as, for example, a plane rotation.  The factorization \eqref{eq:fact-rot} is
an example of a matrix expressed as product of $n - 1$ core transformation and a
diagonal matrix. In this particular case, the facotrization can also be used to give a
compact representation of $H$ that uses only $\bigO(n)$ parameters, as opposed
to the $\bigO(n^2)$ that would be necessary if all the entries of $H$ were
explicitly stored. Each core transformation acts on a pair
of adjacent indices. The matrix $G_j$ in~\eqref{eq:rotation}, for example, acts on the indices $j$ and $j+1$.

The standard single-shift bulge chasing QR algorithm works as follows. Given an
upper Hessenberg matrix $H$, we determine a first core transformation $Q_1$
acting on the indices $1$ and $2$ such that $Q_1 (H - \rho I) e_1 = \alpha e_1$.
The parameter $\rho$ is the \emph{shift}, and has to be carefully chosen in
order to ensure fast and reliable convergence of the method \cite{watkins2007matrix}. The implementation
considered here uses a Wilkinson shift, which is projected onto $\mathbb S^1$
as the matrix $H$ is unitary~\cite{amvw15}.

We use the core transformation $Q_1$ to compute $Q_1 H \tconj{Q_1}$,
which is not upper Hessenberg having a nonzero element in position $(3,1)$.
Another core transformation $Q_2$ acting on the indices $2$ and $3$ is used to
restore the upper Hessenberg structure and obtain $Q_2 Q_1 H \tconj{Q_1}$. The
similarity $Q_2 Q_1 H \tconj{Q_1} \tconj{Q_2}$, however, yields a matrix that is
not upper Hessenberg because of a nonzero element in position $(4,2)$, and the
process is repeated until the nonzero element, the so-called \emph{bulge}, is
eliminated from the last row of the matrix.
The focus on the nonzero element that breaks the upper Hessenberg
structure and is ``chased to the bottom'' until it disappears from the matrix,
justifies the name \emph{bulge-chasing QR} used for this algorithm.

Core-chasing algorithms have a similar formulation, and indeed are
mathematically equivalent~\cite{amrv18}, but do not handle the entries of the
matrix directly, as we now explain.

An upper Hessenberg matrix can always be written as $H = QR$, where $R$ is upper
triangular and $Q = G_1 \ldots G_{n-1}$ is the product of unitary plane
rotations. The core-chasing step starts by computing a core transformation $Q_1$
such that $Q_1 (H - \rho I) e_1$ is a scalar multiple of $e_1$. Keeping $H = QR$
in factored form, the similarity transformation with $Q_1$ gives
\[
  Q_1 H \tconj{Q_1} = Q_1 \underbrace{G_1 \ldots G_{n-1} R}_{H} \tconj{Q_1}.
\]
We now make the following two key observations.
\begin{itemize}
\item Given an upper triangular matrix $R$ and a core transformation $Q_1$, it
  is always possible to find another upper triangular matrix $\wt R$ and core
  transformation $\wt Q_1$ such that $R Q_1 = \wt Q_1 \wt R$. Using the
  terminology of core-chasing algorithms, the computation of $\wt Q_1$ and
  $\wt R$ from $Q_1$ and $R$ is a \emph{passthrough} operation, and can be
  represented pictorially as
  \begin{center}
    \begin{tikzpicture}[baseline={(current bounding box.center)},scale=1.66,y=-1cm]
      \uppertriangular{0.0}{0.0}{};
      \tikzrotation[gray]{1.4}{.4}{}
      \tikzrotation{-.2}{.4}{}
      \shiftthroughrl{1.4}{-.2}{0.4}
    \end{tikzpicture}\;.
  \end{center}
\item Given the matrices $G_i$ and $K_{i}$ acting on the $i$th and $i+1$st
  indices, and $J_{i+1}$ acting on the $i+1$st and the $i+2$nd indices, the
  product $G_i J_{i+1} K_i$ can be refactored as $\wt G_{i+1} \wt J_i \wt K_{i+1}$,
  where $\wt G_{i+1}$ and $\wt K_{i+1}$ act on the $i+1$st and $i+2$nd indices,
  and $\wt J_i$ acts on the $i$th and $i+1$st indices. Similarly, this operation
  can be represented pictorially as
  \begin{center}
    \begin{tikzpicture}[scale=1.66, y = -1cm]
      \tikzrotation{-1.0}{0.0}
      \tikzrotation{-0.8}{0.2}
      \tikzrotation{-0.6}{0.0}
      \node at (-0.4,0.2) {$=$};
      \tikzrotation{-0.2}{0.2}
      \tikzrotation{-0.0}{0.0}
      \tikzrotation{0.2}{0.2}
    \end{tikzpicture}\raisebox{0.31cm}{\;.}
  \end{center}
  In the context of core-chasing algorithms, it is more natural to reinterpret
  the refactorization above as moving the rightmost core transformation to the
  left, which can be displayed as
  \begin{center}
    \begin{tikzpicture}[scale=1.66, y = -1cm]
      \tikzrotation{-1.2}{0.2}
      \tikzrotation{-1.0}{0.0}
      \tikzrotation{-0.8}{0.2}
      \tikzrotation{-0.6}{0.0}
      \turnoverrl{-0.6}{0.0}{-1.2}{0.2}
    \end{tikzpicture}\raisebox{0.31cm}{\;.}
  \end{center}
  Clearly, all the rotations involved in the step above change, but from the
  point of view of the structure, it is as if only one rotation had moved.
  This operation is called \emph{turnover}.
\end{itemize}
With this notation, we can rephrase the factorization obtained after introducing
the core transformation $Q_1$ as
\begin{equation}
  \label{eq:qhq}
  \vcenter{\hbox{%
  \begin{tikzpicture}[scale=1.66, y=-1cm]
    \node at (0.2, .5) {$Q_1 H \tconj{Q_1} = $};
    \tikzrotation{1}{0};
    \foreach \j in {0, 0.2, ..., 0.8} {
      \tikzrotation{1.2+\j}{\j};
    }
    \uppertriangular{1.6}{0}{$R$};
    \tikzrotation{2.8}{0};
    \node at (3.2,.5) {$=$};
    \foreach \j in {0, 0.2, ..., 0.8} {
      \tikzrotation{3.6+\j}{\j};
    }
    \tikzrotation{3.4}{0.2};
    \turnoverrl{4.0}{0}{3.4}{0.2};
    \tikzrotation[gray]{4.0}{0};
    \uppertriangular{4.2}{0}{$\wt R$};
    \shiftthroughrl{5.4}{4.0}{0};
    \tikzrotation[gray]{5.4}{0};
  \end{tikzpicture}}}.
\end{equation}
In the rightmost factorization, we have first fused the top-left rotations, and
then used the \emph{passthrough} and \emph{turnover} to take the rotation that
was on the right to the left. If we call this new core transformation $Q_2$, we
can now perform the similarity transformation $Q_2 Q_1 H \tconj{Q_1} \tconj{Q_2}$ and obtain
the matrix
\begin{equation*}
    \vcenter{\hbox{%
  \begin{tikzpicture}[scale=1.66, y=-1cm]
    \node at (0.2, .5) {$Q_2 Q_1 H \tconj{Q_1} \tconj{Q_2} = $};
    \foreach \j in {0, 0.2, ..., 0.8} {
      \tikzrotation{1.2+\j}{\j};
    }
    \uppertriangular{1.6}{0}{$\wt R$};
    \tikzrotation{2.8}{0.2};
  \end{tikzpicture}}}.
\end{equation*}
The structure of this matrix is similar to that of
$Q_1 H \tconj{Q_1}$, in~\eqref{eq:qhq}, but the rightmost core transformation
has moved down one step. Indeed, it now acts on indices $2$ and $3$ instead of $1$ and $2$.
Carrying on this process will move it further down, until
it is fused at the bottom. At the very end, the core transformation will hit the
bottom rotation in the sequence $G_1 \ldots G_{n-1}$, and they will be
fused together. This completes the chasing, and is mathematically equivalent to chasing the
bulge into the bottom-right corner.

There are a few more technical details to address in order to obtain a complete
algorithm. One key point is how to detect deflations, that is, eigenvalues that
have converged. In the usual bulge-chasing setting, we monitor subdiagonal
elements, setting them to zero as soon as they become ``small enough''. For
core-chasing algorithms, we observe that a subdiagonal element is small if and
only if the corresponding rotation in the sequence $G_1 \ldots G_{n-1}$ is close
to the identity. In fact, this technique is often more accurate than
the customary criterion in practice~\cite{amrv18}.

The main computational step in this process is the refactorization
$R Q_1 = \wt Q_1 \wt R$, which requires $\bigO(n)$ flops in general. Therefore,
a single core-chasing step requires $\bigO(n^2)$ flops, and $\bigO(n^3)$ flops
will be necessary to compute the eigenvalues of a generic Hessenberg matrix, if
about $\bigO(n)$ steps are required for the QR iteration to converge. All the
other operations require only $\bigO(1)$ flops, and contribute only a low-order
term to the total cost.

However, if $H$ is unitary to begin with, then so is the upper triangular matrix
$R$. As upper triangular unitary matrices must be diagonal, the passthrough
operation can be performed in $\bigO(1)$ flops, making the cost of the QR
algorithm quadratic instead of cubic in $n$.
For a more detailed analysis, we refer the reader to the paper where the
algorithm was first introduced~\cite{amvw15}.

\subsection{Sampling the eigenvalues of random unitary and orthogonal matrices}

The approach underlying the discussion in \sect s~\ref{sec:hessenberg-haar}
and~\ref{sec:comp-eigen-unit} is summarized in Algorithm~\ref{alg:new-alg}.

The function \textsc{SampleEigs} samples the joint distribution of orthogonal or
unitary matrices from a specific distribution determined by the value of third
parameter $\xi$. If $\xi$ is 0, then the function samples the eigenvalues of Haar
distributed matrices from the orthogonal group, if $\F = \R$, or from the
unitary group, if $\F = \C$. If $\xi \neq 0$, the algorithm samples the eigenvalue
distribution of matrices whose determinant has the same phase as $\xi$. We recall
that these matrices form a group only if $\xi = 1$, in which case the algorithm
samples the eigenvalue distribution of Haar-distributed matrices from the
special orthogonal group $\Oplus$, if $\F = \R$, or special unitary group
$\Uplus$, if $\F = \C$.

In order to achieve this, we note that for $H$ in~\eqref{eq:fact-rot},
we have that $\det H = \det D = \eu^{\iu \theta}$ for some
$\theta \in (-\pi, \pi]$, since the determinant of plane rotations is 1. Therefore, once the first $n-1$ entries of $D$ are chosen, it
suffices to choose $d_n = \eu^{\iu \Arg \xi - \iu \Arg (\prod_{j=1}^{n-1}d_j)}$,
which ensures that $\det D = \eu^{\iu \Arg \xi}$.
In the pseudocode, the function \textsc{UnitaryQR} denotes a call to the
\texttt{eiscor} routine, which computes the eigenvalues of a product of plane
rotations of the form \eqref{eq:rotation}.

The computational cost of the algorithm can be determined by noticing
that each step of the for loop on line~\ref{ln:new-alg-for} requires only a
constant number of operations, which implies that the whole preprocessing taking
place between line~\ref{ln:new-alg-start} and line~\ref{ln:new-alg-end} requires
only $\bigO(n)$ flops.

\begin{algorithm2e}[t]
  \caption{Sample the joint distribution of random matrices
    matrices.}\label{alg:new-alg}
  \Function{$\textsc{SampleEigs}(n \in \N,
    \F \in \{\R,\C\},
    \detsign \in \F)$
    \funcomment{Sample eigenvalues of orthogonal (if $\F = \R$) or unitary
      (if $\F = \C$) matrices.
      If $\detsign \neq 0$, the determinant of the sampled matrices has the same
      phase as $\detsign$.
      If $\detsign = 0$, the matrices are sampled from $O(n)$ (if $\F = \R$) or
      $U(n)$ (if $\F = \C$).}}{
    $\delta \gets 1$\;\label{ln:new-alg-start}
    \For{$k \gets 1 \algto n-1$}{\label{ln:new-alg-for}
      $v_1 \follows \normdistf(0,1)$\;
      $v_2 \follows \sqrt{\chisquared(n-k)}$\;
      $d_k \gets -\eu^{\iu \Arg v_1}$\;
      $v_1 \gets v_1 - d_k \norm{v}_2$\;
      $U \gets \begin{bsmallmatrix} \delta & \\ & 1 \end{bsmallmatrix}
      \big(I - \frac{2}{\norm{v}_2}v\tconj{v}\big)$\;
      $\varphi \gets \eu^{\iu \Arg \conj{u_{21}}}$\;
      $c_k \gets \varphi\, u_{11}$\;
      $s_k \gets -\varphi\, u_{21}$\;
      $d_k \gets d_k(\conj{\varphi}\,\abs{u_{11}}^2+\varphi\,u_{21}^2)$\;
      $\delta \gets \varphi\,\det U$\;
    }
  }
  \eIf{$\detsign \neq 0$}{
    $d_n \gets \eu^{\iu \Arg \xi - \iu \Arg \big(\prod_{j=1}^{n-1}d_j\big)} $\;
  }{
    $z \gets \normdistf(0,1)$\;
    $d_n \gets -\eu^{\iu \Arg z}$\;\label{ln:new-alg-end}
  }
  \Return $\textsc{UnitaryQR}(c,s,d)$\;
\end{algorithm2e}


\newcommand{\newalg}{\texttt{sampleeig}}
\newcommand{\naivealg}{\texttt{samplemat}}
\pgfplotsset{width=0.46\textwidth}
\ifx\ismimseprint\undefined
\pgfplotsset{height=2.1in}
\else
\pgfplotsset{height=2.4in}
\fi

\section{Experimental results}%
\label{sec:experimental-results}

In this section we first validate the new algorithm experimentally, and then
compare its performance with that of the na{\"i}ve method for sampling the joint
eigenvalues distribution of Haar-distributed unitary matrices. The experiments
were run in MATLAB 9.8.0 (R2020a) Update 4 on a GNU/Linux machine equipped with
an Intel Xeon E5-2640 v3 CPU running at 2.60GHz.

In our tests we compare the following implementations.
\begin{itemize}
\item \naivealg, an algorithm that generates a Haar-distributed unitary matrix
  by calling the function $\textsc{Umult}$ in Algorithm~\ref{alg:action-Haar} on
  the identity matrix, and then computes its eigenvalues by using the built-in
  MATLAB function \verb|eig|.
\item \newalg, an implementation of Algorithm~\ref{alg:new-alg} that exploits
  the \texttt{eiscor} package to run the QR algorithm on the unitary matrix in
  factored form.
\end{itemize}

Our implementations of \naivealg\ and \newalg\ are available on Github.%
\footnote{\url{https://github.com/numpi/random-unitary-matrices}}
For reproducibility, the repository also includes the scripts we used to
run the tests reported here.


\subsection{Unitary matrices}
We start by considering the joint distribution of the eigenvalues of
Haar-distributed matrices in $\Un$ and $\Uplus$.

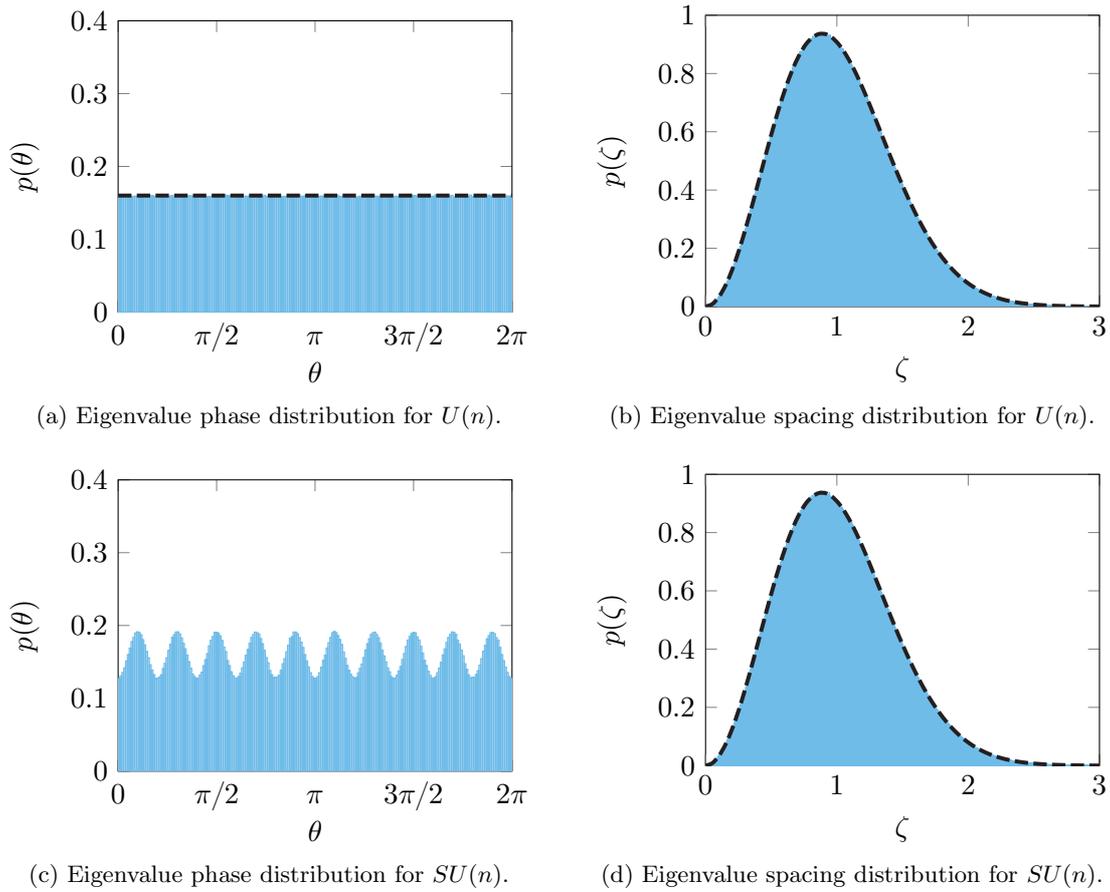
\begin{figure}[h!]
  \centering
  \subfloat[Eigenvalue phase distribution for $\Un$.%
  \label{fig:eig-dist-Un}]{%
    \begin{tikzpicture}
      \begin{axis}[
        height = .26\textheight,
        xlabel={$\theta$},
        ylabel={$p(\theta)$},
        ymin=0, ymax=0.4,
        xmin=0, xmax=6.28318530718,
        domain={0:6.28318530718},
        xtick={0, 1.57079632679, 3.14159265359, 4.71238898038, 6.28318530718},
        xticklabels={$\vphantom{/}0$,$\pi/2$, $\vphantom{/}\pi$, $3\pi/2$,
          $\vphantom{/}2\pi$},
        ]
        \pgfplotstableread{figs/eig-dist-unitary-10-0.dat}\mydata;
        \addplot [ybar interval, fill=colorhist,draw=colorhistborder]
        table [x index=0, y index=1] {\mydata};
        \addplot [dash pattern=on 5pt off 2pt,
        colordist, mark=none, line width=1.5pt] {0.16};
      \end{axis}
    \end{tikzpicture}}\hspace{15pt}
  \subfloat[Eigenvalue spacing distribution for $\Un$.%
  \label{fig:eig-spacing-Un}]{%
    \begin{tikzpicture}
      \begin{axis}[
        height = .26\textheight,
        xlabel={$\zeta$},
        ylabel={$p(\zeta)$},
        ymin=0, ymax=1,
        xmin=0, xmax=3,
        domain={0:3},
        samples=60,
        smooth,
        extra x ticks={0},
        extra x tick labels={$\vphantom{/}$},
        ]
        \pgfplotstableread{figs/eig-spacing-unitary-10-0.dat}\mydata;
        \addplot [ybar interval, fill=colorhist,draw=colorhistborder]
        table [x index=0, y index=1] {\mydata};
        \addplot [dash pattern=on 5pt off 2pt,
        colordist, mark=none, line width=1.5pt]
        {32*(x/3.14159265)^2*exp(-4*x^2/3.14159265)};
      \end{axis}
    \end{tikzpicture}}\\
  \subfloat[Eigenvalue phase distribution for $\Uplus$.%
  \label{fig:eig-dist-SUn}]{%
    \begin{tikzpicture}
      \begin{axis}[
        height = .26\textheight,
        xlabel={$\theta$},
        ylabel={$p(\theta)$},
        ymin=0, ymax=.4,
        xmin=0, xmax=6.28318530718,
        domain={0:6.28318530718},
        xtick={0, 1.57079632679, 3.14159265359, 4.71238898038, 6.28318530718},
        xticklabels={$\vphantom{/}0$,$\pi/2$, $\vphantom{/}\pi$, $3\pi/2$,
          $\vphantom{/}2\pi$},
        ]
        \pgfplotstableread{figs/eig-dist-unitary-10-1.dat}\mydata;
        \addplot [ybar interval, fill=colorhist,draw=colorhistborder]
        table [x index=0, y index=1] {\mydata};
      \end{axis}
    \end{tikzpicture}}\hspace{15pt}
  \subfloat[Eigenvalue spacing distribution for $\Uplus$.%
  \label{fig:eig-spacing-SUn}]{%
    \begin{tikzpicture}
      \begin{axis}[
        height = .26\textheight,
        xlabel={$\zeta$},
        ylabel={$p(\zeta)$},
        ymin=0, ymax=1,
        xmin=0, xmax=3,
        domain={0:3},
        samples=60,
        smooth,
        extra x ticks={0},
        extra x tick labels={$\vphantom{/}$},
        ]
        \pgfplotstableread{figs/eig-spacing-unitary-10-1.dat}\mydata;
        \addplot [ybar interval, fill=colorhist,draw=colorhistborder]
        table [x index=0, y index=1] {\mydata};
        \addplot [dash pattern=on 5pt off 2pt,
        colordist, mark=none, line width=1.5pt]
        {32*(x/3.14159265)^2*exp(-4*x^2/3.14159265)};
      \end{axis}
    \end{tikzpicture}}
  \caption{Phase (left) and spacing (right) distribution of the eigenvalues of
    1,000,000 random unitary matrix of order 10 sampled from the unitary group
    (top) and from the special unitary group (bottom) using \newalg. The dashed
    lines represent the uniform distribution over the interval $[0,2\pi)$ (left)
    and the Wigner surmise in~\eqref{eq:wig-surmise}
    (right).}\label{fig:dist-spacing-unitary}
\end{figure}

Figure~\ref{fig:dist-spacing-unitary} reports the phase distribution and the
spacing of the eigenvalues of 1,000,000 unitary matrices of order 10 sampled from
the unitary group (top row) and from the special unitary group (bottom row)
using \newalg. The histograms in the four plots are normalized so that the total
area of the columns is one. In this way, the histograms can be interpreted as
empirical probability densities and can be compared directly with the
probability density functions they are expected to match.

Let $\eu^{\iu\theta_1}$, \dots, $\eu^{\iu\theta_{n}}$ be the eigenvalue of the
matrix $A \in \Un$ normalized so that for $i$ between~1 and $n$ the phase
$\theta_i$ lies in the interval $[0, 2\pi)$. The histogram in
Figure~\ref{fig:eig-dist-Un} shows the distribution of the phases of the
10,000,000 sampled eigenvalues, whereas the dashed lines indicates the
probability density function of the uniform distribution over the interval
$[0,2\pi)$. As the eigenvalues of unitary matrices lie on the unit circle, our
results indicates that the eigenvalues sampled by the procedure are uniformly
distributed.

Next we investigate the statistical correlation among the eigenvalues sampled by
\newalg. In Figure~\ref{fig:eig-spacing-Un} we plot the probability density
function of the normalized distance between pairs of consecutive eigenvalues,
defined by
\begin{equation*}
  \zeta_i := \frac{n}{2\pi}(\theta_{i+1}-\theta_i),\qquad\theta_{n+1}
  :=\theta_1,\qquad i=1, \dots, n,
\end{equation*}
where the eigenvalues are ordered so that $\theta_1 \le \dots \le \theta_n$. In
this case the dashed line represents the theoretical spacing distribution of
Haar-distributed unitary matrices, known as Wigner
surmise~\cite[\refsect~1.5]{meht04}
\begin{equation}
  \label{eq:wig-surmise}
  p(\zeta) = \frac{\pi\zeta}{2}\exp \left( -\frac{\pi}{4}\zeta^2 \right).
\end{equation}
The empirical distribution of the sampled eigenvalues matches closely the
theoretical one, confirming that the matrices whose eigenvalues \newalg{}
samples are in fact Haar distributed.

To the best of our knowledge, the probability distribution for the phase and
spacing of Haar-distributed matrices in the special unitary group are not known in
closed form,
but we can use \newalg{} to obtain a relative frequency distribution based on
10,000,000 samples. The results in Figure~\ref{fig:eig-dist-SUn} and
Figure~\ref{fig:eig-spacing-SUn} suggest that the phase of the eigenvalues of
these matrices is not uniformly distributed, but the spacing appears to be the
same as for Haar-distributed matrices in $U(n)$, as the empirical distribution
matches the Wigner surmise in~\eqref{eq:wig-surmise}.

We note that the invariance under
multiplication by elements in $\Uplus$ implies
that $Q$ and $\diag(\xi, \dots, \xi) Q$ must have the same distribution
for any $\xi$ such that $\xi^n = 1$, and the phase of the density
of the eigenvalue distribution needs to be $2\pi / n$-periodic,
as proven in Lemma~\ref{lem:su-periodic}. This is clearly visible in
Figure~\ref{fig:eig-dist-SUn}.


\subsection{Orthogonal matrices}

\begin{figure}[p]
  \centering
  \subfloat[Eigenvalue phase distribution for \On.%
  \label{fig:eig-dist-On-even}]{%
    \begin{tikzpicture}
      \begin{axis}[
        height = .26\textheight,
        xlabel={$\theta$},
        ylabel={$p(\theta)$},
        ymin=0, ymax=3.6,
        xmin=0, xmax=6.28318530718,
        domain={0:6.28318530718},
        xtick={0, 1.57079632679, 3.14159265359, 4.71238898038, 6.28318530718},
        xticklabels={$\vphantom{/}0$,$\pi/2$, $\vphantom{/}\pi$, $3\pi/2$,
          $\vphantom{/}2\pi$},
        ytick={0,0.9,1.8,2.7,3.6}
        ]
        \pgfplotstableread{figs/eig-dist-orthog-10-0.dat}\mydata;
        \addplot [ybar interval, fill=colorhist,draw=colorhistborder]
        table [x index=0, y index=1] {\mydata};
      \end{axis}
    \end{tikzpicture}}\hspace{15pt}
  \subfloat[Eigenvalue spacing distribution for \On.%
  \label{fig:eig-spacing-On-even}]{%
    \begin{tikzpicture}
      \begin{axis}[
        height = .26\textheight,
        xlabel={$\zeta$},
        ylabel={$p(\zeta)$},
        ymin=0, ymax=1,
        xmin=0, xmax=3,
        domain={0:3},
        samples=60,
        smooth,
        extra x ticks={0},
        extra x tick labels={$\vphantom{/}$},
        ]
        \pgfplotstableread{figs/eig-spacing-orthog-10-0.dat}\mydata;
        \addplot [ybar interval, fill=colorhist,draw=colorhistborder]
        table [x index=0, y index=1] {\mydata};
        \addplot [dash pattern=on 5pt off 2pt,
        colordist, mark=none, line width=1.5pt]
        {32*(x/3.14159265)^2*exp(-4*x^2/3.14159265)};
      \end{axis}
    \end{tikzpicture}}\\
  \subfloat[Eigenvalue phase distribution for \Oplus.%
  \label{fig:eig-dist-Oplus-even}]{%
    \begin{tikzpicture}
      \begin{axis}[
        height = .26\textheight,
        xlabel={$\theta$},
        ylabel={$p(\theta)$},
        ymin=0, ymax=3.6,
        xmin=0, xmax=6.28318530718,
        domain={0:6.28318530718},
        xtick={0, 1.57079632679, 3.14159265359, 4.71238898038, 6.28318530718},
        xticklabels={$\vphantom{/}0$,$\pi/2$, $\vphantom{/}\pi$, $3\pi/2$,
          $\vphantom{/}2\pi$},
        ytick={0,0.9,1.8,2.7,3.6}
        ]
        \pgfplotstableread{figs/eig-dist-orthog-10-1.dat}\mydata;
        \addplot [ybar interval, fill=colorhist,draw=colorhistborder]
        table [x index=0, y index=1] {\mydata};
      \end{axis}
    \end{tikzpicture}}\hspace{15pt}
  \subfloat[Eigenvalue spacing distribution for \Oplus.%
  \label{fig:eig-spacing-Oplus-even}]{%
    \begin{tikzpicture}
      \begin{axis}[
        height = .26\textheight,
        xlabel={$\zeta$},
        ylabel={$p(\zeta)$},
        ymin=0, ymax=1,
        xmin=0, xmax=3,
        domain={0:3},
        samples=60,
        smooth,
        extra x ticks={0},
        extra x tick labels={$\vphantom{/}$},
        ]
        \pgfplotstableread{figs/eig-spacing-orthog-10-1.dat}\mydata;
        \addplot [ybar interval, fill=colorhist,draw=colorhistborder]
        table [x index=0, y index=1] {\mydata};
        \addplot [dash pattern=on 5pt off 2pt,
        colordist, mark=none, line width=1.5pt]
        {32*(x/3.14159265)^2*exp(-4*x^2/3.14159265)};
      \end{axis}
    \end{tikzpicture}}\\
  \subfloat[Eigenvalue phase distribution for \Ominus.%
  \label{fig:eig-dist-Ominus--even}]{%
    \begin{tikzpicture}
      \begin{axis}[
        height = .26\textheight,
        xlabel={$\theta$},
        ylabel={$p(\theta)$},
        ymin=0, ymax=3.6,
        xmin=0, xmax=6.28318530718,
        domain={0:6.28318530718},
        xtick={0, 1.57079632679, 3.14159265359, 4.71238898038, 6.28318530718},
        xticklabels={$\vphantom{/}0$,$\pi/2$, $\vphantom{/}\pi$, $3\pi/2$,
          $\vphantom{/}2\pi$},
        ytick={0,0.9,1.8,2.7,3.6}
        ]
        \pgfplotstableread{figs/eig-dist-orthog-10--1.dat}\mydata;
        \addplot [ybar interval, fill=colorhist,draw=colorhistborder]
        table [x index=0, y index=1] {\mydata};
      \end{axis}
    \end{tikzpicture}}\hspace{15pt}
  \subfloat[Eigenvalue spacing distribution for \Ominus.%
  \label{fig:eig-spacing-Ominus-even}]{%
    \begin{tikzpicture}
      \begin{axis}[
        height = .26\textheight,
        xlabel={$\zeta$},
        ylabel={$p(\zeta)$},
        ymin=0, ymax=1,
        xmin=0, xmax=3,
        domain={0:3},
        samples=60,
        smooth,
        extra x ticks={0},
        extra x tick labels={$\vphantom{/}$},
        ]
        \pgfplotstableread{figs/eig-spacing-orthog-10--1.dat}\mydata;
        \addplot [ybar interval, fill=colorhist,draw=colorhistborder]
        table [x index=0, y index=1] {\mydata};
        \addplot [dash pattern=on 5pt off 2pt,
        colordist, mark=none, line width=1.5pt]
        {32*(x/3.14159265)^2*exp(-4*x^2/3.14159265)};
      \end{axis}
    \end{tikzpicture}}
  \caption{Phase (left) and spacing (right) distribution of the eigenvalues of
    1,000,000 random orthogonal matrix of order 10 sampled from the orthogonal
    group (top), the special orthogonal group (middle), and the connected
    component of the orthogonal group that contains only matrices with negative
    determinant (bottom) using \newalg. The dashed line in the right column
    represent the Wigner surmise
    in~\eqref{eq:wig-surmise}.}\label{fig:dist-spacing-orthogonal-even}
\end{figure}
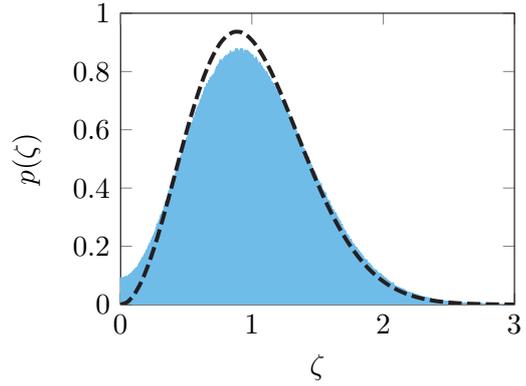
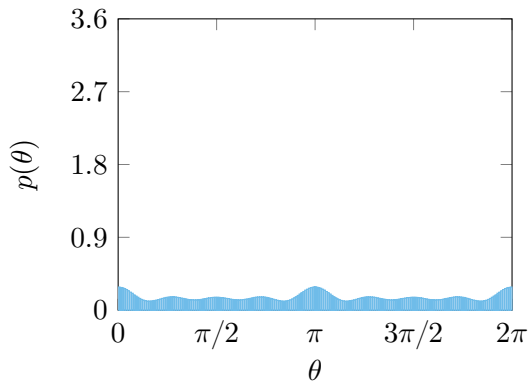
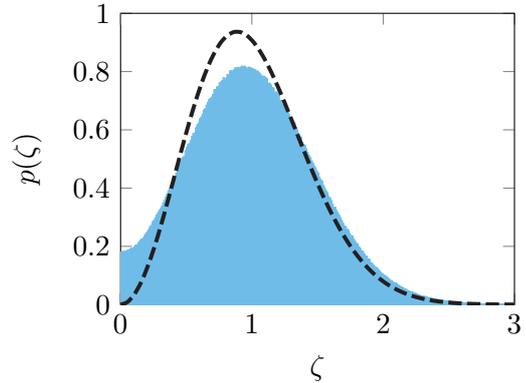
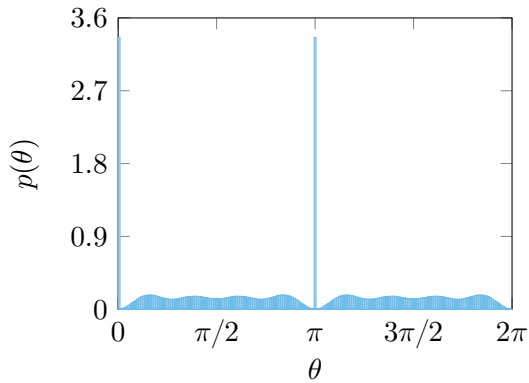
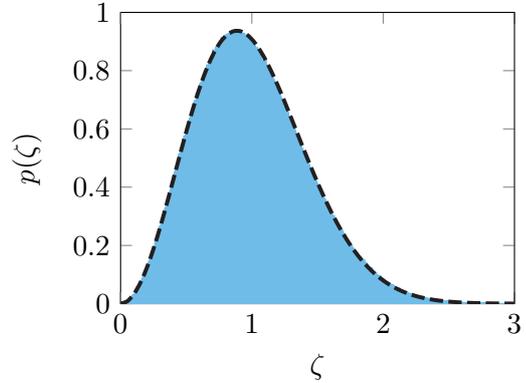

\begin{figure}[p]
  \centering
  \subfloat[Eigenvalue phase distribution for \On.%
  \label{fig:eig-dist-On-odd}]{%
    \begin{tikzpicture}
      \begin{axis}[
        height = .26\textheight,
        xlabel={$\theta$},
        ylabel={$p(\theta)$},
        ymin=0, ymax=4,
        xmin=0, xmax=6.28318530718,
        domain={0:6.28318530718},
        xtick={0, 1.57079632679, 3.14159265359, 4.71238898038, 6.28318530718},
        xticklabels={$\vphantom{/}0$,$\pi/2$, $\vphantom{/}\pi$, $3\pi/2$,
          $\vphantom{/}2\pi$},
        ytick={0,1,2.0,3.0,4.0}
        ]
        \pgfplotstableread{figs/eig-dist-orthog-09-0.dat}\mydata;
        \addplot [ybar interval, fill=colorhist,draw=colorhistborder]
        table [x index=0, y index=1] {\mydata};
      \end{axis}
    \end{tikzpicture}}\hspace{15pt}
  \subfloat[Eigenvalue spacing distribution for \On.%
  \label{fig:eig-spacing-On-odd}]{%
    \begin{tikzpicture}
      \begin{axis}[
        height = .26\textheight,
        xlabel={$\zeta$},
        ylabel={$p(\zeta)$},
        ymin=0, ymax=1,
        xmin=0, xmax=3,
        domain={0:3},
        samples=60,
        smooth,
        extra x ticks={0},
        extra x tick labels={$\vphantom{/}$},
        ]
        \pgfplotstableread{figs/eig-spacing-orthog-09-0.dat}\mydata;
        \addplot [ybar interval, fill=colorhist,draw=colorhistborder]
        table [x index=0, y index=1] {\mydata};
        \addplot [dash pattern=on 5pt off 2pt,
        colordist, mark=none, line width=1.5pt]
        {32*(x/3.14159265)^2*exp(-4*x^2/3.14159265)};
      \end{axis}
    \end{tikzpicture}}\\
  \subfloat[Eigenvalue phase distribution for \Oplus.%
  \label{fig:eig-dist-Oplus-odd}]{%
    \begin{tikzpicture}
      \begin{axis}[
        height = .26\textheight,
        xlabel={$\theta$},
        ylabel={$p(\theta)$},
        ymin=0, ymax=4,
        xmin=0, xmax=6.28318530718,
        domain={0:6.28318530718},
        xtick={0, 1.57079632679, 3.14159265359, 4.71238898038, 6.28318530718},
        xticklabels={$\vphantom{/}0$,$\pi/2$, $\vphantom{/}\pi$, $3\pi/2$,
          $\vphantom{/}2\pi$},
        ytick={0,1,2.0,3.0,4.0}
        ]
        \pgfplotstableread{figs/eig-dist-orthog-09-1.dat}\mydata;
        \addplot [ybar interval, fill=colorhist,draw=colorhistborder]
        table [x index=0, y index=1] {\mydata};
      \end{axis}
    \end{tikzpicture}}\hspace{15pt}
  \subfloat[Eigenvalue spacing distribution for \Oplus.%
  \label{fig:eig-spacing-Oplus-odd}]{%
    \begin{tikzpicture}
      \begin{axis}[
        height = .26\textheight,
        xlabel={$\zeta$},
        ylabel={$p(\zeta)$},
        ymin=0, ymax=1,
        xmin=0, xmax=3,
        domain={0:3},
        samples=60,
        smooth,
        extra x ticks={0},
        extra x tick labels={$\vphantom{/}$},
        ]
        \pgfplotstableread{figs/eig-spacing-orthog-09-1.dat}\mydata;
        \addplot [ybar interval, fill=colorhist,draw=colorhistborder]
        table [x index=0, y index=1] {\mydata};
        \addplot [dash pattern=on 5pt off 2pt,
        colordist, mark=none, line width=1.5pt]
        {32*(x/3.14159265)^2*exp(-4*x^2/3.14159265)};
      \end{axis}
    \end{tikzpicture}}\\
  \subfloat[Eigenvalue phase distribution for \Ominus.%
  \label{fig:eig-dist-Ominus-odd}]{%
    \begin{tikzpicture}
      \begin{axis}[
        height = .26\textheight,
        xlabel={$\theta$},
        ylabel={$p(\theta)$},
        ymin=0, ymax=4,
        xmin=0, xmax=6.28318530718,
        domain={0:6.28318530718},
        xtick={0, 1.57079632679, 3.14159265359, 4.71238898038, 6.28318530718},
        xticklabels={$\vphantom{/}0$,$\pi/2$, $\vphantom{/}\pi$, $3\pi/2$,
          $\vphantom{/}2\pi$},
        ytick={0,1,2.0,3.0,4.0}
        ]
        \pgfplotstableread{figs/eig-dist-orthog-09--1.dat}\mydata;
        \addplot [ybar interval, fill=colorhist,draw=colorhistborder]
        table [x index=0, y index=1] {\mydata};
      \end{axis}
    \end{tikzpicture}}\hspace{15pt}
  \subfloat[Eigenvalue spacing distribution for \Ominus.%
  \label{fig:eig-spacing-Ominus-odd}]{%
    \begin{tikzpicture}
      \begin{axis}[
        height = .26\textheight,
        xlabel={$\zeta$},
        ylabel={$p(\zeta)$},
        ymin=0, ymax=1,
        xmin=0, xmax=3,
        domain={0:3},
        samples=60,
        smooth,
        extra x ticks={0},
        extra x tick labels={$\vphantom{/}$},
        ]
        \pgfplotstableread{figs/eig-spacing-orthog-09--1.dat}\mydata;
        \addplot [ybar interval, fill=colorhist,draw=colorhistborder]
        table [x index=0, y index=1] {\mydata};
        \addplot [dash pattern=on 5pt off 2pt,
        colordist, mark=none, line width=1.5pt]
        {32*(x/3.14159265)^2*exp(-4*x^2/3.14159265)};
      \end{axis}
    \end{tikzpicture}}
  \caption{Phase (left) and spacing (right) distribution of the eigenvalues of
    1,000,000 random orthogonal matrix of order 9 sampled from the orthogonal
    group (top), the special orthogonal group (middle), and the connected
    component of the orthogonal group that contains only matrices with negative
    determinant (bottom) using \newalg. The dashed line in the right column
    represent the Wigner surmise
    in~\eqref{eq:wig-surmise}.}\label{fig:dist-spacing-orthogonal-odd}
\end{figure}

The joint eigenvalue probability density functions for
$\Oplus$ and $\Ominus$ are reported explicitly
in~\cite[\refsect~2.6]{forrester2010log} and \cite{sosh20,weyl46}.
The corresponding joint eigenvalue distribution
for the orthogonal group can be obtained easily,
since a matrix in  $\On$ belongs with equal probability
to $\Oplus$ or $\Ominus$. The eigenvalue distribution for
such matrices can be obtained integrating out $n-1$ variables
and are defined in terms of the diagonal correlation kernel of the process \cite{dish94}. However, excluding the case of unitary
matrices, such expressions are
not easy to evaluate; in most cases the limiting distribution
for large $n$ can be explicitly determined, and is typically
the uniform distribution over $\mathbb S^1$.

We can use \newalg{} to get the empirical
distribution of the phase and spacing of the eigenvalues of these matrices. In
Figure~\ref{fig:dist-spacing-orthogonal-even}, we report the relative frequency
distribution of the phase and spacing of 1,000,000 random matrices of order 10
sampled from the orthogonal group (top row), from the special orthogonal group
(middle row), and from the set of orthogonal matrices with determinant $-1$
(bottom row). In Figure~\ref{fig:dist-spacing-orthogonal-odd} we report the same
data for matrices of order 9, as the behavior of these
distributions changes dramatically depending on the parity of $n$.

The distribution of phase and spacing for the eigenvalues of matrices sampled
from $\On$ appears identical for both matrix dimensions we consider. In
particular, we note that in Figure~\ref{fig:eig-dist-On-even} and
Figure~\ref{fig:eig-dist-On-odd} there is a mass of probability corresponding to
the eigenvalues 1 and $-1$, which is a consequence of the fact that the
eigenvalues of real matrices always appear in conjugate pairs. Therefore, if $n$
is even then matrices with determinant $-1$ must always have both eigenvalues
$1$ and $-1$ (see Figure~\ref{fig:eig-dist-Ominus--even}), whereas if $n$ is odd
then all matrices with determinant 1 must have the eigenvalue 1 (see
Figure~\ref{fig:eig-dist-Oplus-odd}) and all those with determinant $-1$
must have the eigenvalue~$-1$ (see Figure~\ref{fig:eig-dist-Ominus-odd}).


\subsection{Timings and computational complexity}

\begin{figure}
  \centering
  \begin{tikzpicture}
    \begin{axis}[
      width=0.95\textwidth,
      height=8cm,
      grid=both,
      grid style={line width=.2pt, draw=gray!30},
      major grid style={line width=.2pt,draw=gray!60},
      ylabel={$t_{n}$},
      xlabel={$n$},
      ymode=log,
      xmode=log,
      log basis x={2},
      log basis y={10},
      ymin=1e-5,
      ymax=1e3,
      xmin=2,
      xmax=2^15,
      legend pos=north west
      ]
      \pgfplotstableread{figs/complexity.dat}\mydata;
      \addplot table [x index=0, y index=1] {\mydata};
      \addlegendentry{\newalg};
      \addplot table [x index=0, y index=2] {\mydata};
      \addlegendentry{\naivealg};
    \end{axis}
  \end{tikzpicture}
  \caption{Time $t_n$ (in seconds) required by \newalg{} and \naivealg{} to
    sample the eigenvalues of matrices of order $n$ between 2 and $2^{15}$. The
    tests for \naivealg\ have been performed only for $n$ up to
    $2^{13}$. \label{fig:complexity}}
\end{figure}
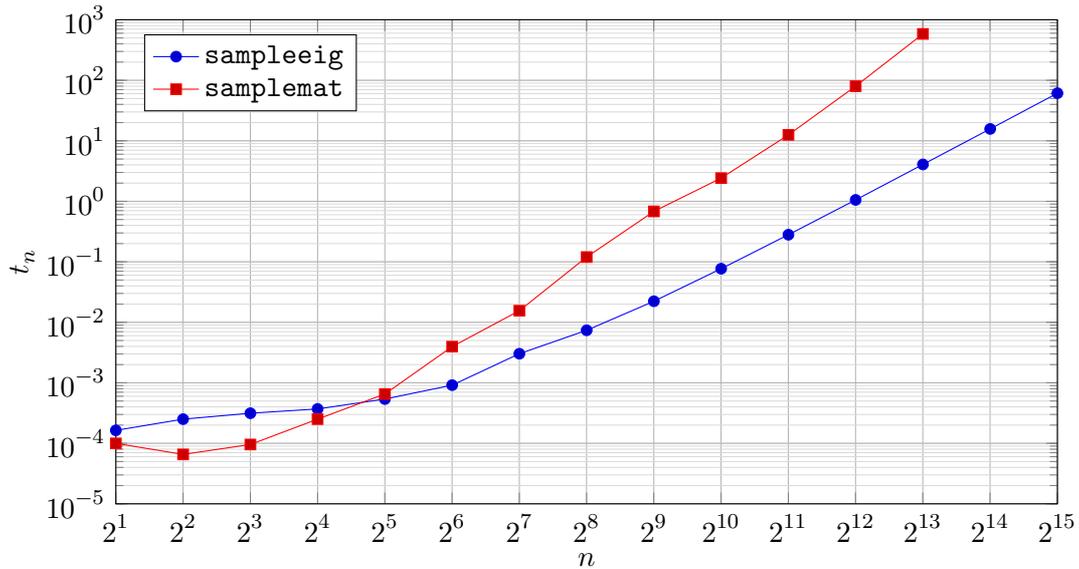

Now we compare the performance of our MATLAB implementations of
\newalg{} and \naivealg{}. Figure~\ref{fig:complexity} shows the time, in
seconds, required by the two algorithms to sample the eigenvalues of matrices of
order $n$ between 2 and $2^{15}$. For matrices of order up to $16$, \newalg{} is
slightly slower than \naivealg{}; this is due to the fact that normalizing the
rotations amounts to a large portion of the overall execution time of the
algorithm.

As the computational cost of this operation scales linearly, however, its
contribution becomes negligible as $n$ grows: for matrices of order $32$ and
above the execution time grows much faster for \naivealg{} than for \newalg{}.
This is expected, since the two algorithms have cubic and quadratic
computational cost, respectively.


\section{Conclusions}%
\label{sec:conclusions}

We have presented a method for sampling the joint distribution of the
eigenvalues of Haar-distributed orthogonal and unitary matrices. The two
ingredients of our approach are a technique for sampling the upper Hessenberg
form of Haar-distributed matrices, and an algorithm for computing the
eigenvalues of an $n \times n$ upper Hessenberg unitary or orthogonal matrix in
$\bigO(n^2)$ flops.

Our experimental results show that the new technique is more efficient than the
na{\"i}ve method that first samples a matrix from the Haar distribution and then
computes its eigenspectrum numerically. We used this algorithm to investigate
experimentally the distribution of the phase and spacing of the eigenvalues of
Haar-distributed matrices from $\Uplus$, $\On$, $\Oplus$, and $\Ominus$, groups
for which these distributions are not known explicitly.

\section*{Acknowledgments}

Preparation of the manuscript was carried out, in part, while the first author
was a Visiting Fellow at the University of Pisa. The authors thank Alan Edelman
for providing feedback on an early draft of the manuscript.


\bibliographystyle{siamplain-doi}
\bibliography{references.bib}

\end{document}